\newcommand{\af}{\mathfrak a}
\newcommand{\bfr}{\mathfrak b}
\newcommand{\ess}{\text{ess}}
\newcommand{\pf}{\mathfrak p}
\newcommand{\qf}{\mathfrak q}
\newcommand{\mf}{\mathfrak m}
\newcommand{\A}{\mathbb A}
\newcommand{\F}{\mathbb F}
\newcommand{\R}{\mathbb R}
\newcommand{\Z}{\mathbb Z}
\renewcommand{\P}{\mathbb P}
\newcommand{\Lc}{\mathcal L}
\newcommand{\Mmc}{\mathcal M}
\newcommand{\Pc}{\mathcal P}
\renewcommand{\a}{\alpha}
\renewcommand{\b}{\beta}
\renewcommand{\l}{\lambda}
\DeclareMathOperator{\fpt}{fpt}
\DeclareMathOperator{\lct}{lct}
\DeclareMathOperator{\Hom}{Hom}
\DeclareMathOperator{\Spec}{Spec}
\DeclareMathOperator{\ini}{in}
\DeclareMathOperator{\vol}{vol}
\DeclareMathOperator{\conv}{conv}
\DeclareMathOperator{\hgt}{height}
\newcommand{\del}{\partial} 
\DeclareMathOperator{\codim}{codim}
\DeclareMathOperator{\PNV}{(\mathbb{P}^n)^\vee}
\newcommand{\ceil}[1]{\lceil #1\rceil}
\newcommand{\floor}[1]{\left\lfloor#1\right\rfloor}
\renewcommand{\char}{\textup{char }}
\renewcommand{\bar}{\overline}
\theoremstyle{plain}
\newtheorem{Theoremx}{Theorem}
\newtheorem{theorem}{Theorem}[section]
\newtheorem{prop}[theorem]{Proposition}
\newtheorem{lemma}[theorem]{Lemma}
\newtheorem*{cor*}{Corollary}
\newtheorem*{theorem*}{Theorem}
\theoremstyle{definition}
\newtheorem{defn}[theorem]{Definition}
\newtheorem{example}[theorem]{Example}
\newtheorem{remark}[theorem]{Remark}
\newtheorem{convention}[theorem]{Convention}
\crefname{prop}{proposition}{propositions}
\Crefname{prop}{Proposition}{Propositions}
\crefname{defn}{definition}{definitions}
\Crefname{defn}{Definition}{Definitions}
\newcommand{\details}[1]{%
  \iftoggle{verbose}{%
    \par\noindent\textcolor{blue}{#1}\par%
  }{}%
}
\begin{document}

\title{On Lower Bounds for the $F$-Pure Thresholds of Equigenerated Ideals}
\author{Benjamin Baily}
\thanks{The author was supported by NSF grant DMS-2101075, NSF RTG grant DMS-1840234, and a Simons dissertation fellowship.}
\begin{abstract}
    Let $k$ be a field of positive characteristic and $R = k[x_0,\dots, x_n]$. We consider ideals $I\subseteq R$ generated by homogeneous polynomials of degree $d$. Takagi and Watanabe proved that $\fpt(I)\geq \hgt(I)/d$; we classify ideals $I$ for which equality is attained. Inspired by a result of de Fernex, Ein, and Musta\c{t}\u{a}, we give a lower bound on $\fpt(I)$ in terms of the height of $\tau(I^{\fpt(I)})$.
    \end{abstract}
\maketitle
\section{Introduction}
The $F$-pure threshold, introduced by Takagi and Watanabe \cite{takagi_f-pure_2004}, is a numerical singularity invariant of pairs in positive characteristic. The $F$-pure threshold was proposed as a positive characteristic analog of the log canonical threshold; whereas the log canonical threshold is widely studied in birational and complex geometry, the $F$-pure threshold better reflects the subtleties of singularities in prime characteristic.

We consider a pair $(R,I)$, where $R$ is a polynomial ring over a field and $I$ is generated by forms of degree $d$. In this setting, Takagi and Watanabe proved the following sharp lower bound on the $F$-pure threshold $\fpt(I)$:
\begin{prop}[\cite{takagi_f-pure_2004}, Proposition 4.2]\label{prop:tw-fpt-equigen}
    Let $k$ be a field of positive characteristic and set $R = k[x_0,\dots, x_n]$. Suppose $I\subseteq R$ is generated by homogeneous polynomials of degree $d$. If $h$ is the height of $I$, then $\fpt(I)\geq h/d$.
\end{prop}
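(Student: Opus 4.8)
The plan is to prove the statement at the maximal ideal $\mf=(x_0,\dots,x_n)$ by exhibiting, for each $q=p^e$, an explicit element of a large power of $I$ that visibly avoids $\mf^{[q]}$. First I would reduce to the case that $k$ is infinite, since $\fpt(I)$ is unchanged under a faithfully flat base extension of the field; since $I$ is homogeneous it suffices to work at $\mf$, where $\fpt(I)=\lim_{e\to\infty}\nu(p^e)/p^e$ with $\nu(q)=\max\{\,r:I^r\not\subseteq\mf^{[q]}\,\}$. So the goal becomes: produce elements of $I^r\setminus\mf^{[q]}$ with $r/q\to h/d$.

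Let $\Sigma$ be the set of exponent vectors of the degree-$d$ monomials that occur in some form of $I$; for $\mu\in\Sigma$ one has $|\mu|_1=d$. Because $I$ is generated in degree $d$, $I$ is contained in the monomial ideal $(x^\mu:\mu\in\Sigma)$, so that ideal has height $\ge h$; equivalently the support hypergraph of $\Sigma$ (vertices $x_0,\dots,x_n$, edges the supports of the $\mu$) has no vertex cover of size $<h$. I would then run a linear-programming argument. The fractional matching number $\max\{\sum_\mu y_\mu:y\ge 0,\ \sum_\mu y_\mu\mu_\ell\le 1\ \forall\ell\}$ equals, by LP duality, the fractional cover number $\min\{\sum_\ell z_\ell:z\ge 0,\ z\cdot\mu\ge 1\ \forall\mu\in\Sigma\}$; and for any feasible $z$ the set $\{\ell:z_\ell\ge 1/d\}$ is an integral vertex cover, because if some $\mu$ avoided it then $z\cdot\mu<\tfrac1d\sum_\ell\mu_\ell=\tfrac1d|\mu|_1=1$. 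Hence that cover has size $\ge h$ and $\sum_\ell z_\ell\ge h/d$, so there is a fractional matching $y^\ast$ of size $\ge h/d$. Rounding down, $a_\mu:=\lfloor(q-1)y^\ast_\mu\rfloor$ gives $\sum_\mu a_\mu\ge(q-1)h/d-|\Sigma|$, while $\gamma:=\sum_\mu a_\mu\mu$ has every coordinate $\le(q-1)\sum_\mu y^\ast_\mu\mu_\ell\le q-1$.

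To finish, for each $\mu\in\Sigma$ I would choose a degree-$d$ form $f_\mu\in I$ with nonzero coefficient on $x^\mu$, normalized to $1$, and consider $\prod_\mu f_\mu^{a_\mu}\in I^{\sum_\mu a_\mu}$. Writing $f_\mu=x^\mu+(\text{other degree-}d\text{ monomials})$, the coefficient of $x^\gamma=x^{\sum_\mu a_\mu\mu}$ in $\prod_\mu f_\mu^{a_\mu}$ receives a contribution of exactly $1$ from selecting $x^\mu$ out of every factor $f_\mu^{a_\mu}$ (the relevant multinomial coefficient being $1$ in every characteristic), plus a polynomial with zero constant term in the remaining coefficients of the $f_\mu$; so for a general such choice this coefficient is nonzero, whence $\prod_\mu f_\mu^{a_\mu}\notin\mf^{[q]}$ and $\nu(q)\ge\sum_\mu a_\mu\ge(q-1)h/d-|\Sigma|$. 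Letting $q\to\infty$ yields $\fpt(I)\ge h/d$. The step most in need of care is this last one: ruling out characteristic-$p$ cancellation, i.e.\ confirming that no ``non-diagonal'' term in the expansion of $\prod_\mu f_\mu^{a_\mu}$ can cancel the diagonal contribution to the coefficient of $x^\gamma$ once the coefficients of the $f_\mu$ are specialized within the subspace $I_d\subseteq R_d$ rather than being fully independent; the combinatorial optimization and the rounding bookkeeping (keeping the error term $O(1)$) are routine by comparison.
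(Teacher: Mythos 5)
The paper itself does not prove this proposition---it quotes it from Takagi--Watanabe---so your argument stands or falls on its own, and the step you flag at the end is a genuine gap, not a routine verification. The combinatorial reductions are fine: $I\subseteq(x^\mu:\mu\in\Sigma)$ forces every vertex cover of the supports to have size at least $h$, LP duality then yields a fractional matching of size at least $h/d$, and the rounding bookkeeping (error $O(1)$, all coordinates of $\gamma$ at most $q-1$) is correct. What fails is the claim that for a general choice of $f_\mu\in[I]_d$ the coefficient of $x^\gamma$ in $\prod_\mu f_\mu^{a_\mu}$ is nonzero. The ``$1+P(c)$ with $P(0)=0$'' reasoning treats the off-diagonal coefficients as free parameters, but they are constrained to the subspace $[I]_d$, which need not contain the monomial $x^\mu$ itself and may be far too small to supply any genericity. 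Concretely, take $p=2$ and $I=(x^3+y^3+z^3)\subseteq k[x,y,z]$, so $h=1$, $d=3$, $\Sigma=\{(3,0,0),(0,3,0),(0,0,3)\}$, and $[I]_3$ is one-dimensional, so after normalization every $f_\mu$ is forced to equal $x^3+y^3+z^3$. The optimal (and perfectly legitimate) fractional matching is $y^\ast=(1/3,1/3,1/3)$, of size $1\geq h/d$; with $a=\lfloor (q-1)/3\rfloor$ one gets $\gamma=(3a,3a,3a)$, and the coefficient of $x^\gamma$ in $\prod_\mu f_\mu^{a_\mu}=f^{3a}$ is $\binom{3a}{a,a,a}=\binom{3a}{a}\binom{2a}{a}\equiv 0 \pmod 2$ for every $a\geq 1$. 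So for a valid choice of matching the certificate monomial is killed identically, with no freedom left in the $f_\mu$ to restore it.

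In that example a different matching (say $y^\ast=(1/3,0,0)$, concentrated at a single extremal monomial) does work, but your proof contains no mechanism for selecting such a ``good'' matching, and arranging the matching, the certificate monomial, and the characteristic-$p$ arithmetic to be compatible is precisely the nontrivial content hiding in this proposition; the LP existence statement alone does not provide it. For comparison, the route this paper relies on elsewhere (in the proof of Theorem~\ref{thm:equigen-fpt}) is different in kind: after passing to an infinite field one replaces $I$ by a complete intersection $J\subseteq I$ generated by $h$ general $d$-forms and proves the bound for $J$ using structural results (initial ideals, Hilbert function estimates, the monomial formula for $\fpt$), rather than exhibiting a single surviving monomial of $I^r$ directly. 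If you want to salvage your approach, the missing lemma is: there exists a fractional matching of size at least $h/d$ supported on monomials, and a compatible choice of forms, for which the relevant multinomial contribution is a $p$-adic ``no-carry'' configuration; as stated, genericity in $[I]_d$ cannot substitute for this.
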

If we instead consider a field of characteristic $0$ and the log canonical threshold (lct), much more is known. We refer the reader to \cite{pragacz_impanga_2012} for background on log canonical singularities and the lct.
\begin{theorem}[\cite{de_fernex_bounds_2003}, Theorems 3.4 and 3.5]\label{thm:equigen-lct}
    Let $k$ be an algebraically closed field of characteristic zero and set $R = k[x_0,\dots, x_n]$. Suppose $I = (f_1,\dots, f_r)\subseteq R$ is generated by forms of degree $d$. Let $e$ denote the codimension of $Z$, where $Z$ is the non-klt locus of $(R, I^{\lct(I)})$. Then we have $\lct(I) \geq e/d$ with equality if and only if there exist linear forms $\ell_1,\dots, \ell_e\in R$ such that $Z = V(\ell_1,\dots, \ell_e)$ and $f_i\in k[\ell_1,\dots, \ell_e]$ for all $1\leq i\leq r$. 
\end{theorem}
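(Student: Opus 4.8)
The plan is to reduce the statement to the analysis of a single divisorial valuation and to the $\mathfrak m_0$-primary case, where $\mathfrak m_0=(x_0,\dots,x_n)$. Since $I$ is homogeneous, $(R,I)$ is $\mathbb G_m$-equivariant; hence $\lct(I)=\lct_0(I)$, the non-klt locus $Z$ is a cone, and $0$ lies on every component of $Z$. This makes the \emph{``if''} direction short: if $Z=V(\ell_1,\dots,\ell_e)$ for independent linear forms and each $f_i\in k[\ell_1,\dots,\ell_e]$, then $f_i$, a degree-$d$ form in the $\ell_j$, lies in $(\ell_1,\dots,\ell_e)^d=I_Z^d$, so $\operatorname{ord}_Z(f_i)\ge d$; as also $\operatorname{ord}_Z(f_i)\le\operatorname{ord}_0(f_i)=d$, we get $\operatorname{ord}_Z(I)=d$. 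Since $A_X(\operatorname{ord}_Z)=\codim Z=e$, evaluating the defining infimum of $\lct$ on the valuation $\operatorname{ord}_Z$ gives $\lct(I)\le e/d$, and together with the lower bound, $\lct(I)=e/d$.

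For the \emph{lower bound}, the weaker inequality $\lct(I)\ge\hgt(I)/d$ is the characteristic-zero analogue of Proposition~\ref{prop:tw-fpt-equigen} and admits a parallel proof: replacing the generators of $I$ by $\dim R$ general $k$-linear combinations (which alters no multiplier ideal), these cut out the origin when $\hgt(I)=\dim R$ and form a complete intersection of degree-$d$ forms; blowing up the origin yields a log resolution with a single relevant exceptional divisor $E$ satisfying $A_X(E)=\dim R$ and $\operatorname{ord}_E(I)=d$, so $\lct(I)=(\dim R)/d$ in that case, and the general case follows via the restriction theorem for multiplier ideals. Upgrading $\hgt(I)/d$ to $e/d$ — i.e.\ accounting for the possibility that $Z$ is strictly smaller than $V(I)$ — uses the full hypothesis and is carried out together with the next step.

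The \emph{``only if''} direction is the core. Assume $\lct(I)=e/d$. The key claim is that $\lct$ is then computed by the order valuation $\operatorname{ord}_W$ along a top-dimensional component $W$ of $Z$, with $\operatorname{ord}_W(I)=d$. Granting it: each $f_i\in I_W^d$, and since $f_i$ has degree $d$ while $I_W^d$ coincides in degree $d$ with the subring generated by the linear forms vanishing on $W$, one deduces that $W=V(\ell_1,\dots,\ell_e)$ is linear, that $f_i\in k[\ell_1,\dots,\ell_e]$, and (by a degree computation on a general hyperplane section) that $Z=W$. To prove the key claim, let $v$ be a divisorial valuation computing $\lct(I)=e/d$, so $A_X(v)=\tfrac ed\,v(I)$ with $\overline{c_X(v)}$ a component $W$ of $Z$ — a cone through $0$. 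From $I\subseteq\mathfrak m_0^d$ and $0\in W$ one gets $v(I)\ge d\cdot v(\mathfrak m_0)\ge d$, so $A_X(v)\ge e$; independently, $A_X(v)\ge\codim\overline{c_X(v)}\ge e$. \textbf{The main obstacle} is to upgrade these to equalities using equigeneration: one wants $v(I)=d$, $A_X(v)=\codim W=e$, and hence $v=\operatorname{ord}_W$ by the rigidity of the discrepancy bound $A_X\ge\codim(\text{center})$. (That all $f_i$ have the \emph{same} degree $d$ is essential here: for the cuspidal ideal $(x^2,y^3)$ one has $\lct=5/6$, computed by a valuation of log discrepancy $5$, far above $2=\codim(\text{non-klt locus})$.) I would attack this by factoring $v$ through $\operatorname{Bl}_0\mathbb A^{n+1}$, where $v$ becomes a divisorial valuation centered on the strict transform of $V(I)$ inside the exceptional $\mathbb P^n$, so that the degree-$d$ homogeneity of the $f_i$ bounds the order of the strict-transform ideal along the exceptional divisor — setting up an induction on $n$ (or on $\dim V(I)$) that terminates in the $\mathfrak m_0$-primary case treated above.
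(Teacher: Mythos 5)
This statement is quoted by the paper from de Fernex--Ein--Musta\c{t}\u{a} and is not proved there, so there is no internal argument to compare with; your proposal has to stand on its own, and as written it does not close. The ``if'' direction and the reduction of the ``only if'' direction to your ``key claim'' are reasonable in outline, but the mathematical substance of the theorem --- the lower bound $\lct(I)\ge e/d$ when $e>\hgt(I)$, and, in the equality case, that the threshold is computed by $\operatorname{ord}_W$ along a codimension-$e$ component $W$ of $Z$ with $\operatorname{ord}_W(I)=d$ --- is exactly what you flag as ``the main obstacle'' and leave as a one-sentence plan (factor $v$ through $\operatorname{Bl}_0\mathbb{A}^{n+1}$ and induct). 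That plan is not carried out, and its announced terminus is wrong: such an induction does not terminate in the $\mathfrak m_0$-primary case you treated, but in the case where only the non-klt locus (not $V(I)$) is zero-dimensional. Your earlier argument (general reductions plus a single blow-up of the origin) covers only $V(I)=\{0\}$. For instance $I=(x^3,xy^2)\subseteq k[x,y]$ has non-klt locus $\{0\}$ but is not $\mathfrak m$-primary; your reduction argument yields only $\lct\ge\hgt(I)/d=1/3$, whereas the theorem asserts (and one checks) $\lct=2/3$. So the base case of the proposed induction is itself an unproved instance of the theorem, and since your ``if'' direction also invokes the lower bound $e/d$, even that direction is not complete as written.

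Two further steps are incorrect or glossed. The chain $v(I)\ge d\cdot v(\mathfrak m_0)\ge d$ fails unless the center of $v$ is the origin: if $\overline{c_X(v)}=W$ is a positive-dimensional cone, some linear form does not vanish on $W$, so $v(\mathfrak m_0)=0$; your conclusion $A_X(v)\ge e$ survives via the other inequality, but $v(I)\ge d$ --- which is what you actually need --- does not follow this way. Moreover $\operatorname{ord}_W(f_i)\ge d$ places $f_i$ in the symbolic power $I_W^{(d)}$, not in $I_W^d$; to conclude that $W$ is linear and $f_i\in k[\ell_1,\dots,\ell_e]$ one needs something like $\operatorname{mult}_0(f_i)\ge\operatorname{ord}_W(f_i)\cdot\operatorname{mult}_0(W)$, forcing $\operatorname{mult}_0(W)=1$, which you neither state nor prove; similarly the rigidity ``$A_X(v)=\codim c_X(v)$ implies $v=\operatorname{ord}_{c_X(v)}$'' is asserted without argument. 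These are not cosmetic: together with the missing key claim they constitute the theorem, which in the source is established with tools of a different nature (lct--multiplicity comparisons and generic restriction/inversion-of-adjunction arguments at a general point of a top-dimensional component of $Z$) that do not appear in your sketch.
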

Our goal is to bridge the gap between \Cref{prop:tw-fpt-equigen} and \Cref{thm:equigen-lct}. As we show in \Cref{ex:naive-analog-failure}, a naive translation of \Cref{thm:equigen-lct} into characteristic $p$ is not true without an additional hypothesis. Towards the goal of bridging this gap, we contribute two results. The first is a classification of ideals for which the lower bound in \Cref{prop:tw-fpt-equigen} is sharp.
\begin{Theoremx}\label{thm:equigen-fpt}
    Let $k$ be an algebraically closed field of positive characteristic. Let $I$ be a homogeneous ideal in $k[x_0,\dots, x_n]$ generated by $d$-forms. If $h$ is the height of $I$, then $\fpt(I) = h/d$ if and only if the integral closure $\overline{I}$ of $I$ satisfies $\overline{I} = (x_0,\dots, x_{h-1})^d$ up to a change of coordinates.
\end{Theoremx}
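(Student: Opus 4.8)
The plan is to prove the nontrivial direction: if $\fpt(I) = h/d$ where $h = \hgt(I)$, then after a linear change of coordinates $\bar I = (x_0,\dots,x_{h-1})^d$. The reverse implication is an easy computation, since for $J = (x_0,\dots,x_{h-1})^d$ one checks directly (e.g.\ via the description of $F$-pure thresholds of monomial ideals, or by a base change to the subring $k[x_0,\dots,x_{h-1}]$) that $\fpt(J) = h/d$, and $\fpt$ depends only on the integral closure. For the forward direction, my first step would be to reduce to the case $h = n+1$, i.e.\ where $I$ is primary to the maximal ideal $\mf = (x_0,\dots,x_n)$: passing to a general linear quotient or localizing-and-completing at a minimal prime of height $h$, one arrives at a ring of dimension $h$ in which $I$ becomes $\mf$-primary, and one must check that $\fpt$ is preserved (or only drops) under this operation so that the hypothesis $\fpt = h/d$ is inherited, with equality forcing $\bar I$ to be as claimed in the smaller ring, then lifting back.

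The technical heart is the $\mf$-primary case. Here $\fpt(I) = \lim_e \nu_I(p^e)/p^e$ where $\nu_I(p^e) = \max\{t : I^t \not\subseteq \mf^{[p^e]}\}$. The hypothesis $\fpt(I) = h/d = (n+1)/d$ should be combined with the fact that, since $I$ is generated in degree $d$, the power $I^{t}$ lives in degrees $\geq td$, while $\mf^{[p^e]}$ contains all forms of degree $\geq (n+1)(p^e-1)+1$; comparing, $\nu_I(p^e) \leq \lceil ((n+1)(p^e-1)+1)/d\rceil - 1$ roughly, and the equality $\fpt(I) = (n+1)/d$ means this degree bound is \emph{asymptotically sharp}. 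So I would extract from equality the statement that $I^{t_e}$ contains a form of degree exactly (or nearly exactly) $d\cdot t_e$ that is \emph{not} in $\mf^{[p^e]}$, for $t_e$ growing like $(n+1)p^e/d$. Since $I^{t_e}$ is generated by monomials (in the $f_i$'s) of degree $d t_e$, and a form of degree $\leq (n+1)(p^e-1)$ avoids $\mf^{[p^e]}$ only if it has a monomial with every exponent $\leq p^e-1$, one learns that some product $f_{i_1}\cdots f_{i_{t_e}}$ has, in its expansion, a monomial $x_0^{a_0}\cdots x_n^{a_n}$ with all $a_j \leq p^e - 1$ and $\sum a_j = d t_e \approx (n+1)p^e$ — forcing every $a_j$ to be essentially $p^e - 1$, i.e.\ the monomial is close to $x_0^{p^e-1}\cdots x_n^{p^e-1}$. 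Letting $e\to\infty$ and homogenizing, this should force $(x_0\cdots x_n)^{N} \in I^{(n+1)N/d}$ for appropriate $N$, which via a valuation/Newton-polytope argument pins down the integral closure: $\bar I \supseteq (x_0,\dots,x_n)^d$, hence (being generated in degree $d$ and contained in $\mf^d$) equal to it.

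The main obstacle I anticipate is controlling the "asymptotically sharp" information precisely enough: going from the limit statement $\fpt(I) = (n+1)/d$ to a clean finite-level conclusion about which monomials appear in products of the generators requires care, because a priori the excess degree $(n+1)(p^e-1) - d t_e$ is $o(p^e)$ but need not be bounded, so the extremal monomial is only \emph{approximately} $x_0^{p^e-1}\cdots x_n^{p^e-1}$. Handling this likely needs either a compactness/limiting argument on normalized Newton polytopes of the $f_i$ (showing the limiting body must be a single point, namely $(d/(n+1))\cdot\mathbf 1$, which then forces each $f_i$ to be a scalar multiple of $(x_0\cdots x_n)^{d/(n+1)}$ when $(n+1)\mid d$, and in general forces the integral closure statement), or a more hands-on argument tracking divisibility. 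A secondary subtlety is the change-of-coordinates / field-of-definition issue — $k$ algebraically closed is used to realize the limiting linear forms over $k$ — and the reduction step in the first paragraph must be set up so that it interacts correctly with integral closure (since the conclusion is only about $\bar I$, not $I$ itself). Once the extremal-monomial analysis is in hand, assembling $\bar I = (x_0,\dots,x_{h-1})^d$ and transporting back through the reduction should be routine.
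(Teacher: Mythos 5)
Your reduction collapses the theorem into its only trivial case, and in doing so loses all of the content. If $J$ is an ideal of $k[y_1,\dots,y_h]$ generated by $d$-forms and primary to the maximal ideal $\mathfrak n$, then $h$ general $k$-linear combinations of its generators form a complete intersection $J'\subseteq J\subseteq \mathfrak n^d$ with $e(J') = d^h = e(\mathfrak n^d)$, so Rees's multiplicity theorem gives $\overline{J} = \mathfrak n^d$ \emph{unconditionally}; correspondingly $\fpt(J) = h/d$ holds for every such $J$. Hence after you cut $I$ by $n+1-h$ general hyperplanes through the origin down to an $h$-variable ring, the statement you prove there is automatic, uses no part of the hypothesis $\fpt(I)=h/d$, and carries no information about $I$: there is nothing to lift back. (The alternative of localizing and completing at a minimal prime of $I$ is worse still: $\fpt(I)$ is computed at the homogeneous maximal ideal and need not be inherited at a minimal prime, and the graded conclusion about linear forms does not survive that operation.)

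The genuine difficulty sits exactly one hyperplane before where you stop: height $h$ inside $h+1$ variables, i.e.\ (after reduction) an ideal of height $n$ in $k[x_0,\dots,x_n]$, where $V(I)$ is a finite union of lines through the origin and the conclusion asserts $I\subseteq \mf_p^d$ for a \emph{single} point $p\in\P^n$, i.e.\ that $I$ is a cone over one distinguished line. Your extremal-monomial heuristic only constrains degrees and exponent sizes; it cannot locate such a distinguished coordinate, and the intermediate claims it relies on (a monomial appearing in the expansion of a product of generators yielding membership of $(x_0\cdots x_n)^N$ in powers of $I$ or in $\overline{I}$) do not follow. This critical case is where the paper does real work: degenerate to the initial ideals of the powers $I^{nm}$, bound lattice points of the limiting polytope from below via the Hilbert function of powers of a complete intersection, and invoke Gr\"unbaum's inequality together with its equality case to force the polytope to be a facet-parallel cap, whence $x_0\nmid \ini(f_i)$ and $I\subseteq \mf_p^d$. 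Moreover, even granting that case, ``transporting back'' from a general linear section to $I$ is not routine: one needs a Bertini-type statement that if no point $p\in \P^n$ satisfies $I\subseteq \mf_p^d$, then the same holds for $I|_H$ for a general hyperplane $H$ through the origin; the paper proves this by an incidence-correspondence dimension count. These two inputs are the essential ones, and the plan as written -- reduce to the $\mf$-primary case, then lift -- cannot supply either.
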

The proof of \Cref{thm:equigen-fpt} goes as follows. First, we prove the claim in the case that $I$ is a complete intersection of height $n$ (\Cref{lem:min-fpt-char-CI-height-n}). In this case, let $\pf$ be a minimal prime over $I$. Since $\pf$ is the ideal of a point in $\P^n$, we may change coordinates so that $\pf = (x_1,\dots, x_n)$. We then transform \Cref{lem:min-fpt-char-CI-height-n} to a statement about the graded system of monomial ideals $\{\ini_{>_{\text{lex}}}(I^m)\}_{m>0}$, which we solve using convex geometry: after applying estimates for the Hilbert series of powers of $I$ (\Cref{lem:hilb-fn-power-of-CI}), the result is a consequence of a 1960 result of Gr\"unbaum (\Cref{thm:grunbaum-ineq}).

To generalize beyond the case of a complete intersection, we note that with $h := \hgt I$, then any $h$ general $d$-forms in $I$ generate a complete intersection $J\subseteq I$, and we show that $J$ is a reduction of $I$. To generalize beyond the case that $\hgt I = n$, we consider $I|_H$, where $H$ is a general hyperplane through the origin. By \Cref{prop:tw-fpt-equigen}, we have $h/d\leq \fpt(I|_H) \leq \fpt(I) = h/d$, so $\overline{I|_H} = (x_0,\dots, x_{h-1})^d$. By \Cref{prop:ess-dim-general-linear-space}, we deduce that $I$ has the same form.

\details{The application which motivated this article is the following, which we state in a special case for brevity. \begin{theorem}[\cite{baily_homogeneous_2026}, Theorem 5.11] Let $k = k[x_0,\dots, x_n]$ be a polynomial ring over an algebraically closed field of characteristic $p > 0$. Suppose $f_1,\dots, f_r$ is a homogeneous regular sequence such that $\deg(f_i) = d_i$. Set $I = (f_1,\dots, f_r)$. Then $\fpt(I) \geq \frac{1}{d_1} + \dots + \frac{1}{d_r}$ with equality if and only if $\overline{I} = \overline{(x_1^{d_1},\dots, x_r^{d_r})}$ in suitable coordinates.\end{theorem}
The proof of Theorem 5.11 in op. cit. is inductive on the number of distinct degrees $\{d_i\}$, and the base case is equivalent to Theorem \ref{thm:equigen-fpt}.}

Our second contribution is a version of \cite[Theorem 3.4]{de_fernex_bounds_2003} when the pair $(R, I)$ has well-behaved singularities at the $F$-pure threshold.
\begin{Theoremx}\label{thm:test-ideal-at-threshold}
    Suppose $\char k = p>0$ and $R = k[x_0,\dots, x_n]$. Let $I\subseteq R$ be an ideal generated by $d$-forms and let $c = \fpt(I)$. Let $h$ denote the height of the test ideal $\tau(R, I^c)$. If $(R, I^c)$ is sharply $F$-pure, then $c\geq \frac{h}{d}$.
\end{Theoremx}
The assumption that $(R, I^c)$ is sharply $F$-pure is necessary by \Cref{ex:naive-analog-failure}. When $c = h/d$, we show that $\tau(I^c)$ is generated by linear forms $\ell_1,\dots, \ell_h$ (\Cref{prop:test-ideal-in-equality-case}), but unlike the characteristic zero case (\Cref{thm:equigen-lct}), it may not be the case that $I$ is extended from $k[\ell_1,\dots, \ell_h]$ (\Cref{ex:CHSW}).
\section{Preliminaries}
\subsection{The \texorpdfstring{$F$}{F}-Pure Threshold}
For detailed background on the $F$-pure threshold, we direct the reader to \cite{schwede_sharp_2008,takagi_f-pure_2004}. We summarize key definitions and results.
\begin{defn}
    Let $R$ be a ring of characteristic $p>0$. We let $F_*R$ denote the $R$-module structure on $R$ given by restriction of scalars along the Frobenius map $F:R\to R$. We say $R$ is $F$-finite if $F_*R$ is module-finite over $R$. 
\end{defn}
\begin{defn}[\cite{schwede_sharp_2008}]
    Let $R$ be an $F$-finite ring, $I\subseteq R$ an ideal, and $t\in\R^+$. The pair $(R, I^t)$ is \textit{sharply $F$-split} if for some (equivalently, infinitely many) $e>0$, the map
    \[
    I^{\ceil{t(p^e-1)}}\cdot \Hom(F^e_*R, R)\to R
    \]
    is surjective. 
\end{defn}
\begin{defn}[\cite{takagi_f-pure_2004}]
    The \textit{$F$-pure threshold} of the pair $(R,I)$ is the supremum of all $t$ such that $(R, I^t)$ is sharply $F$-split. We denote this quantity by $\fpt(R, I)$, or $\fpt(I)$ when the ambient ring is clear. 
\end{defn}
In practice, the following proposition is a more useful characterization of the $F$-pure threshold.
\begin{prop}
    Let $k$ be an $F$-finite field of characteristic $p > 0$. Let $R$ be a polynomial ring over $k$, $\mf$ the homogeneous maximal ideal of $R$, and $I\subseteq R$ a homogeneous ideal. Then the $F$-pure threshold of the pair $(R, I)$ is equal to
    \[
    \sup\left\{\frac{\nu}{p^e}: I^\nu\notin \mf^{[p^e]}\right\}.
    \]
    In fact, let $\nu_I(p^e) = \max\{r: I^r\notin \mf^{[p^e]}\}$. Then the $F$-pure threshold of $(R, I)$ is equal to the limit $\lim_{e\to\infty}\nu_I(p^e)/p^e$. 
\end{prop}
\begin{proof}
    See \cite[Proposition 3.10]{de-stefani_graded_2018}.
\end{proof}
\begin{prop}[Properties of the $F$-pure threshold]\label{prop:fpt-properties}
    Let $R$ be an $F$-finite $F$-pure ring of characteristic $p>0$. Then for all ideals $I\subseteq R$ such that $I$ contains a nonzerodivisor, we have
    \begin{enumerate}[(i)]
        \item If $I\subseteq J$, then $\fpt(I)\leq \fpt(J)$.
        \item For all $m > 0$, we have $\fpt(I^m) = m^{-1}\fpt(I)$.
        \item We have $\fpt(I) = \fpt(\bar{I})$, where $\bar{I}$ denotes the integral closure of $I$. 
    \end{enumerate}
\end{prop}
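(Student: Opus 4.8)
The plan is to establish the three items in turn, reducing each to the description $\fpt(I)=\lim_{e\to\infty}\nu_I(p^e)/p^e$ recalled above, which applies verbatim in the regular and graded settings used throughout the rest of the paper; in the stated generality of a reduced $F$-finite $F$-pure ring all three are standard and may be cited from \cite{schwede_sharp_2008,takagi_f-pure_2004}. For (i) I would argue straight from the definition of sharp $F$-splitting: if $I\subseteq J$ then $I^{\ceil{t(p^e-1)}}\subseteq J^{\ceil{t(p^e-1)}}$, so whenever the map $I^{\ceil{t(p^e-1)}}\cdot\Hom(F^e_*R,R)\to R$ is surjective the corresponding map for $J$ is surjective a fortiori. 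Hence $(R,I^t)$ sharply $F$-split implies $(R,J^t)$ sharply $F$-split for every $t$, and taking suprema gives $\fpt(I)\le\fpt(J)$.

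For (ii) the key point is the identity $\nu_{I^m}(p^e)=\floor{\nu_I(p^e)/m}$, which holds because $I^{mr}\notin\mf^{[p^e]}$ exactly when $mr\le\nu_I(p^e)$, i.e. when $r\le\floor{\nu_I(p^e)/m}$. Dividing by $p^e$ and letting $e\to\infty$, the floor costs at most $m/p^e\to 0$, so $\fpt(I^m)=\lim_e\nu_{I^m}(p^e)/p^e=m^{-1}\lim_e\nu_I(p^e)/p^e=m^{-1}\fpt(I)$. In the purely ring-theoretic setting one instead compares the exponents $m\ceil{t(p^e-1)}$ and $\ceil{mt(p^e-1)}$, which differ by less than $m$, and runs the same limiting argument.

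For (iii), monotonicity from (i) applied to the containment $I\subseteq\bar I$ already gives $\fpt(I)\le\fpt(\bar I)$, equivalently $\nu_I(p^e)\le\nu_{\bar I}(p^e)$ for all $e$. For the reverse inequality I would invoke the Brian\c{c}on--Skoda theorem in its Hochster--Huneke tight-closure form: if $I$ is generated by $\ell$ elements then $\overline{I^{\,n}}\subseteq(I^{\,n-\ell+1})^{\ast}$ for $n\ge\ell$, and tight closure is trivial in a regular ring, so $\bar I^{\,n}\subseteq\overline{\bar I^{\,n}}=\overline{I^{\,n}}\subseteq I^{\,n-\ell+1}$. Thus $n-\ell+1>\nu_I(p^e)$ forces $\bar I^{\,n}\subseteq\mf^{[p^e]}$, whence $\nu_{\bar I}(p^e)\le\nu_I(p^e)+\ell-1$. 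Sandwiching $\nu_I(p^e)\le\nu_{\bar I}(p^e)\le\nu_I(p^e)+\ell-1$, dividing by $p^e$, and letting $e\to\infty$ gives $\fpt(\bar I)=\fpt(I)$; the hypothesis that $I$ contains a nonzerodivisor is what keeps all these quantities finite.

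The only step I expect to require real care is (iii): the clean argument above exploits regularity of $R$, which suffices for every application in this paper, but in the stated generality one must replace the final inclusion by the appropriate tight-closure Brian\c{c}on--Skoda statement and observe that $I^{\,n-\ell+1}\subseteq\mf^{[p^e]}$ still follows, or else deduce (iii) from the known invariance of test ideals and $F$-jumping numbers under integral closure. Items (i) and (ii) are essentially formal.
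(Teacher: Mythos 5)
Your proposal is correct in substance, but it takes a different route from the paper only in the sense that the paper does not argue at all: its proof of this proposition is a one-line citation to Takagi--Watanabe \cite{takagi_f-pure_2004}, Proposition 2.2, items (1), (2), (6), whereas you supply direct arguments. Your proofs of (i) and (ii) are the standard ones and are fine; note only that the identity $\nu_{I^m}(p^e)=\floor{\nu_I(p^e)/m}$ lives in the regular (or graded polynomial) setting where the $\nu$-description applies, and in the stated generality of a reduced $F$-finite $F$-pure ring one really does need the ceiling comparison $m\ceil{t(p^e-1)}$ versus $\ceil{mt(p^e-1)}$ that you mention, run for exponents strictly below the threshold so the bounded discrepancy can be absorbed. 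For (iii), your Brian\c{c}on--Skoda argument is correct where $R$ is regular, which covers every use of the proposition in this paper, and you are right to flag that regularity is doing real work there. A cleaner fix in the general setting, which avoids tight closure altogether: an $F$-finite reduced ring is excellent and hence analytically unramified, so by Rees's theorem there is a single integer $k$ (depending on $I$) with $\overline{I^{\,n+k}}\subseteq I^{\,n}$ for all $n\ge 1$; combined with $\bar I^{\,n}\subseteq\overline{I^{\,n}}$ this gives the bounded shift between the filtrations $\{I^n\}$ and $\{\bar I^{\,n}\}$ directly, and the same limiting argument (phrased with the sharp $F$-splitting maps $I^{\ceil{t(p^e-1)}}\cdot\Hom(F^e_*R,R)\to R$ rather than with $\mf^{[p^e]}$) yields $\fpt(I)=\fpt(\bar I)$. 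With that substitution your outline is a complete proof of the cited facts; alternatively, citing \cite{takagi_f-pure_2004} as the paper does is of course sufficient.
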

\begin{proof}
    See \cite[Proposition 2.2]{takagi_f-pure_2004} (1), (2), (6).
\end{proof}
\begin{prop}\label{prop:semicont}
    Let $R = k[x_0,\dots, x_n]$. Let $>$ be a monomial order. Let $I\subseteq R$ be an ideal, and $\ini_>(I)$ the initial ideal of $I$ with respect to $>$. Then $\fpt(\ini_>(I))\leq \fpt(I).$
\end{prop}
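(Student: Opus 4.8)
The statement to prove is that $\fpt(\ini_>(I)) \le \fpt(I)$, i.e., passing to an initial ideal can only decrease (or preserve) the $F$-pure threshold. The natural strategy is to realize $\ini_>(I)$ as a degeneration of $I$ — a flat family over $\mathbb{A}^1$ (or $\Spec k[t]$) whose general fiber is $I$ and whose special fiber is $\ini_>(I)$ — and then invoke semicontinuity of $F$-pure thresholds in flat families. Concretely, I would use the standard Gröbner degeneration: choose a weight vector $w$ refining the monomial order $>$ sufficiently (so that $\ini_w(I) = \ini_>(I)$), form the homogenization $\tilde I \subseteq R[t]$ with respect to $w$, and note that $R[t]/\tilde I$ is $k[t]$-flat, with fiber over $t \ne 0$ isomorphic to $R/I$ and fiber over $t=0$ equal to $R/\ini_w(I)$.

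Let me think about the characteristic involved here... The cleanest approach is actually via the $\nu$-invariant characterization from the preceding proposition. Write $\mf$ for the homogeneous maximal ideal. I would like to show $\nu_{\ini_>(I)}(p^e) \le \nu_I(p^e)$ for every $e$, since dividing by $p^e$ and taking the limit then gives the result. So suppose $\ini_>(I)^r \not\subseteq \mf^{[p^e]}$; I must show $I^r \not\subseteq \mf^{[p^e]}$. Here I would use the compatibility of initial ideals with powers and with the Frobenius power of the maximal ideal: $\ini_>(I^r) \supseteq \ini_>(I)^r$ (in fact for a monomial order one has $\ini_>(I)^r \subseteq \ini_>(I^r)$, possibly with equality depending on conventions), and $\ini_>(\mf^{[p^e]}) = \mf^{[p^e]}$ since $\mf^{[p^e]}$ is already monomial. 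The key point is that for a flat degeneration, containment is a closed condition: if $I^r \subseteq \mf^{[p^e]}$, then passing to initial ideals preserves the containment, giving $\ini_>(I^r) \subseteq \ini_>(\mf^{[p^e]}) = \mf^{[p^e]}$, hence $\ini_>(I)^r \subseteq \mf^{[p^e]}$. Contrapositively, $\ini_>(I)^r \not\subseteq \mf^{[p^e]}$ forces $I^r \not\subseteq \mf^{[p^e]}$, which is exactly $\nu_{\ini_>(I)}(p^e) \le \nu_I(p^e)$.

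The main technical obstacle is justifying that $\ini_>(I^r) \subseteq \mf^{[p^e]}$ implies $\ini_>(I)^r \subseteq \mf^{[p^e]}$, together with the fact that $I^r \subseteq J$ for a monomial ideal $J$ implies $\ini_>(I^r) \subseteq J$. The latter is the elementary observation that if $f \in I^r \subseteq J$ with $J$ monomial, then $\ini_>(f) \in J$ (every monomial appearing in $f$ lies in $J$), so the monomials generating $\ini_>(I^r)$ all lie in $J$. The former needs $\ini_>(I)^r \subseteq \ini_>(I^r)$, which holds because a product of leading terms of elements of $I$ is the leading term of the product of those elements (the monomial order is multiplicative). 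I would also need the semicontinuity-of-$\fpt$-type fact to be packaged correctly, but the $\nu$-invariant argument sidesteps any deep flatness machinery and reduces everything to these elementary facts about monomial orders. One should take care that $\mf^{[p^e]}$ is indeed fixed under taking initial ideals — true since it is generated by the monomials $x_0^{p^e}, \dots, x_n^{p^e}$ — and that the supremum defining $\fpt$ may be computed with the homogeneous maximal ideal in the graded setting, which the cited \cite[Proposition 3.10]{de-stefani_graded_2018} guarantees.
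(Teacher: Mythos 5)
Your argument is correct, and it is essentially the standard one underlying the claim the paper cites from Takagi--Watanabe (the paper gives no in-text proof, only that citation): since $\mf^{[p^e]}$ is a monomial ideal, $I^r\subseteq \mf^{[p^e]}$ forces $\ini_>(I)^r\subseteq\ini_>(I^r)\subseteq \mf^{[p^e]}$, hence $\nu_{\ini_>(I)}(p^e)\leq \nu_I(p^e)$ for all $e$, and dividing by $p^e$ gives the inequality of thresholds. The opening paragraph about Gr\"obner degenerations and flatness is dispensable — your actual proof never uses it — and the one point worth making explicit is that the identity $\fpt = \lim_e \nu(p^e)/p^e$ with $\mf$ the homogeneous maximal ideal is only available (per the paper's preceding proposition) when $I$ is homogeneous or one works locally at the origin; for a general ideal $\lim_e\nu_I(p^e)/p^e$ computes the threshold of the pair localized at $\mf$, which can be larger than $\fpt(I)$ read globally. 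Since every application of this proposition in the paper is to homogeneous ideals, and you flag the graded justification via the cited reference, this caveat does not affect the substance of your argument.
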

\begin{proof}
    See the claim preceding \cite[Remark 4.6]{takagi_f-pure_2004}.
\end{proof}
\subsection{Newton Polytopes of Monomial Ideals}
When working with monomial ideals, one often identifies a monomial $x_0^{a_0}\cdots x_n^{a_n}$ with the point $(a_0,\dots, a_n)\in \Z_{\geq 0}^{n+1}$. For future reference, it will help to give a name to this identification.
\begin{defn}
    Let $k$ be a field. We define the map 
    \[\log: \{\text{monomials in }k[x_0,\dots, x_n]\}\to \Z_{\geq 0}^{n+1},\qquad\log(x_0^{a_0}\cdots x_n^{a_n}) = (a_0,\dots, a_n).\]
\end{defn}
\begin{defn}
    Let $\af\subseteq k[x_0,\dots, x_n]$ be a monomial ideal. Then the \textit{Newton Polytope} of $\af$, denoted $\Gamma(\af)$, is the convex hull in $\R^{n+1}$ of $\log(\af)$. Later on, we will let $\conv(-)$ denote the convex hull of a set.
\end{defn}
\begin{remark}
    We record several properties of $\Gamma(\af)$.
    \begin{enumerate}[(i)]
        \item $\Gamma(\af)$ is a closed, convex, unbounded subset of the first orthant of $\R^{n+1}$.
        \item When $\af$ is an $\mf$-primary ideal, the complement of $\Gamma(\af)$ inside the first orthant is an open, bounded polyhedron.
        \item For two ideals $\af, \bfr$, the Minkowski sum of $\Gamma(\af)$ and $\Gamma(\bfr)$ is equal to $\Gamma(\af\bfr)$. In particular, $\Gamma(\af^n) = n\Gamma(\af)$.
    \end{enumerate}
\end{remark}
For the proof of \Cref{thm:equigen-fpt}, we will also require the following.
\begin{defn}\label{defn:simplex}
    We define the standard $n$-simplex $\Delta_n\subseteq \R^{n+1}$ as follows:
    \[
    \Delta_n = \{(a_0,\dots, a_n): 0\leq a_i, a_0 +\dots + a_n = 1.\}
    \]
\end{defn}
\begin{defn}
    Let $I\subseteq k[x_0,\dots, x_n]$ be a homogeneous ideal and $t\in \Z^+$. We let $[I]_t$ denote the vector space of $t$-forms in $I$.
\end{defn}
\begin{defn}\label{defn:gamma-a-t}
    Let $\af\subseteq k[x_0,\dots, x_n]$ be a monomial ideal and $t\in \Z^+$. We define $\Gamma(\af, t)$ as the convex hull of $\log([\af]_t)$, and we let $\gamma(\af, t)$ denote the relative interior of $\Gamma(\af, t)$ inside $t\Delta_n$. 
\end{defn}
\begin{remark}
    It is sometimes the case that $\Gamma(\af,t)\subsetneq \Gamma(\af)\cap t\Delta_n$, even if $\af$ is integrally closed. Consider $\af = (x_0,x_1^3)$ as an ideal of $k[x_0,x_1]$; we have $(0.5, 1.5)\in (\Gamma(\af)\cap 2\Delta_1)\setminus \Gamma(\af, 2)$.
\end{remark}
The following proposition shows that Newton polytope of a monomial ideal determines the $F$-pure threshold.
\begin{prop}[\cite{hernandez_f-purity_2016}, Proposition 36]\label{prop:monomial-fpt}
     Let $\af\subseteq  k[x_0,\dots, x_n]$ be a monomial ideal. Then 
     \[
     \fpt(\af) = \frac{1}{\mu}\text{, where }\mu = \inf\{t: t\vec{1}\in \Gamma(\af)\}.
     \]
\end{prop}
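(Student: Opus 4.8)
The plan is to work with the limit formula $\fpt(\af)=\lim_{e\to\infty}\nu_\af(p^e)/p^e$ recalled in the preliminaries (here $\nu_\af(p^e)=\max\{r:\af^r\not\subseteq\mf^{[p^e]}\}$ and $\mf=(x_0,\dots,x_n)$), reducing the computation of $\nu_\af(p^e)$ to a single monomial‑membership question. Identifying a monomial with its exponent vector via $\log$: since $\af^r$ and $\mf^{[p^e]}=(x_0^{p^e},\dots,x_n^{p^e})$ are monomial ideals, $\af^r\not\subseteq\mf^{[p^e]}$ precisely when $\af^r$ contains some monomial $x^{\vec a}$ with $\vec a\le(p^e-1)\vec 1$; and because $(x_0\cdots x_n)^{p^e-1}=x^{(p^e-1)\vec 1}$ is then a monomial multiple of $x^{\vec a}$, this holds if and only if $(x_0\cdots x_n)^{p^e-1}\in\af^r$. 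So $\nu_\af(p^e)=\max\{r:(x_0\cdots x_n)^{p^e-1}\in\af^r\}$, and I will squeeze this between $(p^e-1)/\mu-s$ and $(p^e-1)/\mu$, where $s$ is the number of minimal generators of $\af$. (Throughout I take $\af$ proper and nonzero, so $0<\mu<\infty$.)

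For the upper bound: if $(x_0\cdots x_n)^{p^e-1}\in\af^r$ then $(p^e-1)\vec 1\in\Gamma(\af^r)$, and by the Minkowski property in the Remark on Newton polytopes $\Gamma(\af^r)=r\,\Gamma(\af)$, so $\tfrac{p^e-1}{r}\vec 1\in\Gamma(\af)$. By the very definition of $\mu$ this forces $\tfrac{p^e-1}{r}\ge\mu$, i.e.\ $r\le(p^e-1)/\mu$. Hence $\nu_\af(p^e)\le(p^e-1)/\mu$, and dividing by $p^e$ and letting $e\to\infty$ gives $\fpt(\af)\le1/\mu$.

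For the lower bound — the subtler direction, where the trap to avoid is concluding from $\mu\vec 1\in\Gamma(\af)$ that $(x_0\cdots x_n)^{\mu}\in\af$ (false in general: a lattice point of $\Gamma(\af)$ need not be an exponent vector of a monomial of $\af$, e.g.\ $(1,1)\in\Gamma((x_0^2,x_1^2))$ while $x_0x_1\notin(x_0^2,x_1^2)$) — I write $\mu\vec 1\in\conv(\log(\af))$ as a convex combination of exponent vectors of monomials of $\af$ and collect terms, obtaining minimal generators $x^{\vec g_1},\dots,x^{\vec g_s}$ of $\af$ and a representation $\mu\vec 1=\sum_{i=1}^s\lambda_i\vec g_i+\vec v$ with $\lambda_i\ge0$, $\sum_i\lambda_i=1$, $\vec v\in\R_{\ge0}^{n+1}$. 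For each $e$ put $m_i=\floor{\frac{p^e-1}{\mu}\lambda_i}\in\Z_{\ge0}$; then in every coordinate $k$,
\[
\Bigl(\sum_i m_i\vec g_i\Bigr)_k\ \le\ \frac{p^e-1}{\mu}\sum_i\lambda_i(\vec g_i)_k\ =\ \frac{p^e-1}{\mu}\,(\mu-v_k)\ \le\ p^e-1,
\]
so $(x_0\cdots x_n)^{p^e-1}$ is a monomial multiple of $\prod_i(x^{\vec g_i})^{m_i}\in\af^{m_1+\cdots+m_s}$, whence $\nu_\af(p^e)\ge\sum_i m_i\ge\frac{p^e-1}{\mu}-s$. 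Letting $e\to\infty$ gives $\fpt(\af)\ge1/\mu$, so $\fpt(\af)=1/\mu$. The one genuinely delicate point is this last massaging — turning the containment $\mu\vec 1\in\Gamma(\af)$ into an \emph{honest} monomial of $\af$ packed under the diagonal corner $(p^e-1)\vec 1$; everything else is bookkeeping with the Minkowski identity and the limit formula for $\fpt$. (Alternatively one could clear the denominators of the $\lambda_i$ and $\vec v$ to produce a power $\af^q$ literally containing $(x_0\cdots x_n)^{q\mu}$, and then apply $\fpt(\af)=q\,\fpt(\af^q)$ together with monotonicity from \Cref{prop:fpt-properties}; the rounding argument sidesteps this.)
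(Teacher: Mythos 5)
Your argument is correct. Note, though, that the paper offers no proof of this proposition at all: it is quoted verbatim from H\'ernandez (\cite{hernandez_f-purity_2016}, Proposition 36), so there is no internal argument to compare against; what you have written is a complete, self-contained replacement for that citation. Your route is essentially the standard one for monomial ideals: reduce $\nu_\af(p^e)$ to the single membership question $(x_0\cdots x_n)^{p^e-1}\in\af^r$ (legitimate because both $\af^r$ and $\mf^{[p^e]}$ are monomial ideals), get the upper bound from $\Gamma(\af^r)=r\Gamma(\af)$ and the definition of $\mu$, and get the lower bound by expressing $\mu\vec 1=\sum_i\lambda_i\vec g_i+\vec v$ over the minimal generators and rounding with floors, which costs only the bounded error $s$ and disappears in the limit $\nu_\af(p^e)/p^e$. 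You correctly flag and avoid the real pitfall (a lattice point of $\Gamma(\af)$ need not come from a monomial of $\af$). Two small points you rely on implicitly but which are harmless: the infimum defining $\mu$ is attained because $\Gamma(\af)=\conv\{\vec g_i\}+\R_{\ge 0}^{n+1}$ is a closed polyhedron (consistent with Remark (i) of the paper), and the limit formula $\fpt(\af)=\lim_e\nu_\af(p^e)/p^e$ is used under the $F$-finiteness/graded hypotheses recalled in the preliminaries; you might also say one word about why $\mu>0$ for proper $\af$ (every point of $\log(\af)$ has coordinate sum at least $1$). With those remarks, the proof stands.
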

Following the proof of \cite{de_fernex_multiplicities_2004}, Theorem 1.4 and the terminology of \cite{mayes_limiting_2014}, we also define the \textit{limiting polytope} of a graded system of monomial ideals.
\begin{defn}
    Let $\af_\bullet$ be a graded system of monomial ideals. That is, suppose $\af_r\af_s\subseteq \af_{r+s}$ for all $r,s\in \Z^+$. We define $\Gamma(\af_\bullet)$ as the closure in $\R^{n+1}$ of the ascending union $\{\frac{1}{2^m}\Gamma(\af_{2^m})\}_{m>0}$. 
\end{defn}
\details{Since we have $\ini_>(f)\ini_>(g) = \ini_>(fg)$, it follows that $\ini_>(I^s)\ini_>(I^t)\subseteq \ini_>(I^{t+s})$, so $\af_\bullet$ as above is indeed a graded system of ideals.}
\subsection{Essential Codimension}
    \begin{defn}[Essential Codimension]\label{defn:ess-dim}
        Let $J\subseteq R = k[x_0,\dots, x_n]$ be a homogeneous ideal. The essential codimension $\ess(J)$ is equal to the minimal $r$ for which there exist linear forms $\ell_1,\dots, \ell_r$ such that $J$ is extended from $I\subseteq k[\ell_1,\dots, \ell_r]$. 
    \end{defn}
    \begin{lemma}\label{lem:ess-dim-sanity-check}
        Let $I, J, \ell_1,\dots, \ell_r$ be as in \Cref{defn:ess-dim}. Then $\ess(I) = r$.
    \end{lemma}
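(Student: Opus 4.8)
The plan is to unwind \Cref{defn:ess-dim} and to exploit that the relation ``$J$ is extended from $I$'' is transitive. Recall that the phrase ``$I,J,\ell_1,\dots,\ell_r$ as in \Cref{defn:ess-dim}'' carries with it the minimality built into that definition: $r=\ess(J)$, the ideal $J$ is extended from $I\subseteq k[\ell_1,\dots,\ell_r]$, and no presentation of $J$ using fewer than $r$ linear forms exists. We identify $k[\ell_1,\dots,\ell_r]$ with a polynomial ring in $r$ variables and prove $\ess(I)=r$ for $I$ regarded as an ideal of that ring, handling the inequalities $\ess(I)\le r$ and $\ess(I)\ge r$ separately.

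For $\ess(I)\le r$ it suffices to know that $\ell_1,\dots,\ell_r$ are $k$-linearly independent, for then $I$ is tautologically extended from itself inside $k[\ell_1,\dots,\ell_r]$, a ring generated over $k$ by $r$ linear forms. If the $\ell_i$ were linearly dependent we could, after relabeling, express $\ell_r$ as a combination of $\ell_1,\dots,\ell_{r-1}$, so that $k[\ell_1,\dots,\ell_r]=k[\ell_1,\dots,\ell_{r-1}]$ and $J$ would be extended from $I\subseteq k[\ell_1,\dots,\ell_{r-1}]$, contradicting the minimality of $r=\ess(J)$. Hence the $\ell_i$ are independent and $\ess(I)\le r$.

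For $\ess(I)\ge r$, set $s=\ess(I)$ and choose linear forms $m_1,\dots,m_s$ of $k[\ell_1,\dots,\ell_r]$ and an ideal $I_0\subseteq k[m_1,\dots,m_s]$ with $I=I_0\cdot k[\ell_1,\dots,\ell_r]$. Each $m_j$ is a $k$-linear combination of $\ell_1,\dots,\ell_r$, hence a linear form of $R$. Extension of ideals is transitive along the containments $k[m_1,\dots,m_s]\subseteq k[\ell_1,\dots,\ell_r]\subseteq R$, so
\[
J \;=\; I\cdot R \;=\; I_0\cdot k[\ell_1,\dots,\ell_r]\cdot R \;=\; I_0\cdot R ,
\]
exhibiting $J$ as extended from $I_0\subseteq k[m_1,\dots,m_s]$. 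Minimality of $\ess(J)=r$ forces $s\ge r$. Combining the two inequalities gives $\ess(I)=r$.

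The argument is essentially bookkeeping, so I do not anticipate a genuine obstacle; the only points requiring care are the two already addressed above, namely that the $\ell_i$ occurring in a minimal presentation of $J$ are linearly independent (so that $k[\ell_1,\dots,\ell_r]$ is honestly a polynomial ring in $r$ variables, and ``$\ess(I)$'' makes sense) and that linear forms of the intermediate subring $k[\ell_1,\dots,\ell_r]$ remain linear forms of $R$.
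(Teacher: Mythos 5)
Your proof is correct and takes essentially the same route as the paper: the bound $\ess(I)\leq r$ is immediate, and the reverse inequality comes from transitivity of extension, since any presentation of $I$ over $s$ linear forms of $k[\ell_1,\dots,\ell_r]$ gives a presentation of $J$ over $s$ linear forms of $R$, forcing $s\geq \ess(J)=r$. Your extra check that $\ell_1,\dots,\ell_r$ are linearly independent is a harmless elaboration of a point the paper leaves implicit.
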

    \begin{proof}The bound $\ess(I)\leq r$ is immediate. Conversely, if $I$ is extended from an ideal $I'\subseteq k[\ell'_1,\dots, \ell'_s]\subseteq k[\ell_1,\dots, \ell_r]$, then $J$ is extended from the same ideal, so $\ess(J)\leq \ess(I)$.
    \end{proof}
\section{Classification of Minimal \texorpdfstring{$F$}{F}-Pure Thresholds}
\subsection{A Bertini Theorem for Essential Codimension}
\begin{convention}
    We identify $\PNV$ with the space of hyperplanes passing through $0\in \A^{n+1}$, as opposed to the usual convention of identifying $\PNV$ with the space of hyperplanes in $\P^n$.
\end{convention}
The following standard lemma relates the condition $\ess(J) < n+1$ to a more familiar condition. 
\begin{lemma}\label{lem:cone-condition}
    Let $k$ be an algebraically closed field, $R = k[x_0,\dots, x_n]$, and $J \subseteq R$ an ideal generated by $d$-forms $f_1,\dots, f_r$. Then $\ess(J)\leq n$ if and only if there exists $p\in \mathbb P^n$ such that $J \subseteq \mf_p^d$.
\end{lemma}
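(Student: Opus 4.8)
The plan is to reduce both sides of the equivalence to a single configuration: a set of $n$ independent linear forms together with the point of $\P^n$ it cuts out. The basic dictionary is that for independent linear forms $\ell_1,\dots,\ell_n\in R$, the ideal $(\ell_1,\dots,\ell_n)$ is exactly $\mf_p$ for the point $p=V(\ell_1,\dots,\ell_n)\in\P^n$, and every $\mf_p$ arises this way (since $\dim_k R_1=n+1$, a point of $\P^n$ imposes $n$ independent linear conditions). The one algebraic fact I would isolate first is: if $\mf_p=(\ell_1,\dots,\ell_n)$, then the degree-$d$ graded piece $[\mf_p^d]_d$ equals $k[\ell_1,\dots,\ell_n]_d$, the $k$-span of the degree-$d$ monomials in $\ell_1,\dots,\ell_n$. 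This is immediate, since $\mf_p^d$ is generated by precisely those monomials, so any degree-$d$ element of it is a $k$-linear combination of them.

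For the implication ``$\exists p$ with $J\subseteq\mf_p^d$'' $\Rightarrow$ ``$\ess(J)\le n$'', I would write $\mf_p=(\ell_1,\dots,\ell_n)$ and observe that each generator $f_i$ is a $d$-form lying in $\mf_p^d$, hence $f_i\in[\mf_p^d]_d=k[\ell_1,\dots,\ell_n]_d$. Thus $J$ is generated by elements of the subring $k[\ell_1,\dots,\ell_n]$, i.e. $J$ is extended from an ideal of that subring, so $\ess(J)\le n$.

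For the converse, suppose $\ess(J)=s\le n$, so $J=IR$ with $I$ an ideal of $k[\ell_1,\dots,\ell_s]$; after replacing the $\ell_i$ by a basis of their span we may assume $\ell_1,\dots,\ell_s$ independent (still $s\le n$), and after replacing $I$ by $J\cap k[\ell_1,\dots,\ell_s]$ we may assume $I$ homogeneous (this intersection still has $J$ as its extension, as it contains the original $I$ and is contained in $J$). The key point is that $J$ is generated in degree $d$, so $J$ — hence $I$ — has no nonzero homogeneous element of degree $<d$; therefore $I$ is contained in the $d$-th power of the homogeneous maximal ideal of $k[\ell_1,\dots,\ell_s]$, that is, $I\subseteq(\ell_1,\dots,\ell_s)^d$. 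Extending $\ell_1,\dots,\ell_s$ to $n$ independent linear forms $\ell_1,\dots,\ell_n$ and setting $p=V(\ell_1,\dots,\ell_n)$, we get $J=IR\subseteq(\ell_1,\dots,\ell_s)^dR\subseteq(\ell_1,\dots,\ell_n)^dR=\mf_p^d$.

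The only genuine subtlety — and the step I would be careful about — is the role of the ``generated in degree $d$'' hypothesis, which is essential on both sides: in the forward direction it is what forces each $f_i$ into $[\mf_p^d]_d$ (note $[\mf_p^d]_e$ is strictly larger than $k[\ell_1,\dots,\ell_n]_e$ for $e>d$, so the argument would fail for generators of higher degree), and in the converse it is exactly what prevents $I$ from having low-degree elements, which would otherwise contribute terms lying outside $k[\ell_1,\dots,\ell_n]$ to $[J]_d$. Apart from tracking this, the remainder is routine bookkeeping with extension of ideals along $k[\ell_1,\dots,\ell_s]\hookrightarrow R$.
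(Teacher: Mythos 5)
Your proof is correct and follows essentially the same route as the paper's: the containment $J\subseteq\mf_p^d$ forces the degree-$d$ generators into $k[\ell_1,\dots,\ell_n]_d$ (the paper phrases this after a change of coordinates as ``no monomial summand involves $x_0$''), while the other direction uses that an ideal generated in degree $d$ and extended from a subring generated by at most $n$ linear forms lands in $\mf_p^d$. You simply spell out, coordinate-freely, the degree bookkeeping that the paper leaves implicit, so no further changes are needed.
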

\begin{proof}
    If $\ess(J) \leq n$, then there exist $\ell_1,\dots, \ell_n\in R_1$ such that $J$ is extended from $k[\ell_1,\dots, \ell_n]$. Setting $p = [(\ell_1,\dots, \ell_n)]$, we have $J\subseteq \mf_p^d$. Conversely, suppose $p\in \P^n$ such that $J\subseteq \mf_p^d$ and change coordinates so that $\mf_p = (x_1,\dots, x_n)$. In this case, no monomial summand of the $f_i$ involves $x_0$, so $J$ is extended from $k[x_1,\dots, x_n]$.
\end{proof}
\begin{lemma}\label{lem:maximal-ess-dim-section}
    Let $k$ be an algebraically closed field, $R = k[x_0,\dots, x_n]$, and $J \subseteq R$ a nonzero ideal generated by $d$-forms $f_1,\dots, f_r$. Suppose $\ess(J) = n+1$. Then for general $H\in \PNV$, we have $\ess(J|_H) = n$.
\end{lemma}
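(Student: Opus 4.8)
The plan is to recast the conclusion as a dimension estimate on an incidence variety. Let $\mathcal I = \{(H, q) \in \PNV \times \P^n : q \in H\}$, a projective variety of dimension $2n-1$, and for a hyperplane $H$ write $\ell_H$ for a linear form with $H = V(\ell_H)$. Let $X \subseteq \mathcal I$ be the set of pairs $(H, q)$ for which $f_i \in (\ell_H) + \mf_q^d$ for all $i$. Since $q \in H$ forces $\ell_H \in \mf_q$, passing to $k[H] = R/(\ell_H)$ shows that $(H,q) \in X$ exactly when $J|_H \subseteq \mf_{\bar q}^{\,d}$, where $\bar q \in \P(H) \cong \P^{n-1}$ is the image of $q$. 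Hence, by \Cref{lem:cone-condition} applied in the $n$-variable polynomial ring $k[H]$, a hyperplane $H$ lies in the image $\pi_1(X)$ of the projection to $\PNV$ precisely when $J|_H = 0$ or $\ess(J|_H) \le n-1$. Since $\ess(J|_H) \le n$ always, it therefore suffices to prove $\pi_1(X) \ne \PNV$; and as $X$ is closed in $\mathcal I$ (discussed below) and $\pi_1$ is a morphism of projective varieties, this will follow from $\dim X \le n-1$.

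To bound $\dim X$ I would study the second projection $\pi_2 : X \to \P^n$. First, $(H,q) \in X$ forces $f_i \in \mf_q$ for all $i$, so $\pi_2(X) \subseteq V(J)$, which has dimension $n - \hgt J \le n-1$ (and is empty, hence $X = \emptyset$, when $J$ is $\mf$-primary). Second, and this is the crux, every fibre of $\pi_2$ is finite. Fix $q$ and choose coordinates with $\mf_q = (x_1,\dots,x_n)$; write each generator as $f_i = \sum_{a=0}^{d} x_0^{a}\, f_{i,a}(x_1,\dots,x_n)$ with $f_{i,a}$ homogeneous of degree $d-a$, and set $\phi_i = \sum_{a \ge 1} x_0^{a-1} f_{i,a}$, a form of degree $d-1$. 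Eliminating one of $x_1,\dots,x_n$ using $\ell_H = 0$ and collecting powers of $x_0$ shows that a hyperplane $H$ through $q$, with $\ell_H = c_1 x_1 + \dots + c_n x_n$, satisfies $(H,q) \in X$ if and only if $\ell_H$ divides every $\phi_i$, that is $\ell_H \mid \gcd(\phi_1,\dots,\phi_r)$. The hypothesis $\ess(J) = n+1$ now gives, by \Cref{lem:cone-condition}, that $J \not\subseteq \mf_q^{\,d}$, so the $\phi_i$ do not all vanish and $g_q := \gcd(\phi_1,\dots,\phi_r)$ is a nonzero form; hence $\pi_2^{-1}(q)$ is contained in the finite set of linear factors of $g_q$. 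By the theorem on fibre dimensions it follows that $\dim X \le \dim \overline{\pi_2(X)} \le \dim V(J) \le n-1$. Consequently general $H \notin \pi_1(X)$, and for such $H$ we have $J|_H \ne 0$ and $\ess(J|_H) \not\le n-1$, so $\ess(J|_H) = n$.

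The main obstacle is the fibre computation above, together with the verification that $X$ is closed in $\mathcal I$. For the latter, I would check that the subspace $(\ell_H)R_{d-1} + [\mf_q^d]_d \subseteq R_d$ has dimension independent of $(H,q) \in \mathcal I$: the intersection $(\ell_H)R_{d-1} \cap [\mf_q^d]_d$ is exactly $\ell_H \cdot [\mf_q^{d-1}]_{d-1}$, by a short argument using that $\ell_H$ has vanishing order $1$ at $q$ (so its image in $\mf_q/\mf_q^2$ is nonzero). These subspaces therefore form a subbundle of the trivial bundle with fibre $R_d$ over $\mathcal I$, and $X$ is the common zero locus of the sections induced by $f_1,\dots,f_r$ in the quotient bundle. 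The remaining points — identifying membership in $X$ with the condition $J|_H \subseteq \mf_{\bar q}^{\,d}$, the invocation of the fibre-dimension theorem, and the degenerate cases $d=1$ and $J$ $\mf$-primary, where $X=\emptyset$ outright — are routine.
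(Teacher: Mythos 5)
Your proposal is correct and follows essentially the same route as the paper: an incidence correspondence of pairs (hyperplane, point) with the point forced into $V(J)$, finiteness of the fibres over such points via the decomposition $f_i = f_{i,0} + x_0\phi_i$ and the divisibility condition $\ell_H \mid \gcd(\phi_1,\dots,\phi_r)$ (nonzero by $\ess(J)=n+1$ and \Cref{lem:cone-condition}), and a dimension count showing the image in $\PNV$ is a proper closed subset. The only difference is that you also verify closedness of the incidence locus via the constant-rank subbundle argument, a detail the paper leaves implicit.
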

\begin{proof}
    Set $Z = \text{Proj}(R/J)\subseteq \P^n$. We define an incidence correspondence as follows:
    \[
    B = \{(z, H)\in Z\times \PNV: z\in H, f_i|_H \in \mf_z^d\text{ for all }1\leq i\leq r\}.
    \]
    Let $p: B\to Z, q: B\to \PNV$ be the projections. Fix $z\in Z$ and change coordinates so that $z = [0:\dots :0:1]$. Write $f_i =: g_i + x_nh_i$ for $g_i\in \mf_z^d, h_i\in \mf^{d - 1}$. Let $(z,H)\in B_z$ where $H = V(\ell)$. 
    
    By definition, there exist $g'_i\in \mf_z^{d}, h'_i\in \mf^{d-1}$ such that $g_i + x_nh_i = g'_i + \ell h'_i$. Write $h'_i =: g''_i + x_nh''_i$, where $g_i''\in \mf_z^{d-1}$. Then $x_n(h_i - \ell h''_i) = g'_i + \ell g''_i - g_i \in \mf_z^d$, so we must have $h_i - \ell h''_i = 0$. In particular, $\ell\mid h_i$. It follows that 
    \[B_z = \{(z, V(\ell): \ell\mid h_i \text{ for all }1\leq i\leq r\}.\] By assumption, $\ess(J) = n+1$. Since $\ess(J) = n+1$, by \Cref{lem:cone-condition} we have $h_i\neq 0$ for some $i$. As each $h_i$ has at most $d-1$ linear factors, we must have $|B_z| \leq d-1 < \infty$.

By the previous paragraph, every closed fiber $B_z$ is zero-dimensional, so $\dim B\leq \dim Z$. Consequently, $\dim q(B)\leq \dim B \leq \dim Z < n$, so $q(B)$ is a proper closed subset of $\PNV$, and so for general $H\in \PNV$, there is no $z\in Z$ such that $(z,H)\in B$. Consequently, there is no $z\in Z\cap H$ such that $f_i \in \mf_z^d|_H$ for all $i$, so another application of \Cref{lem:cone-condition} gives $\ess(J|_H) = n$.
\end{proof}
The following proposition describes the behavior of essential codimension under restriction to a general linear subspace through the origin.
\begin{prop}\label{prop:ess-dim-general-linear-space}
    Let $k$ be an algebraically closed field, $R = k[x_0,\dots, x_n]$, and $J \subseteq R$ a homogeneous ideal. Set $r = \hgt(J)$. Let $L = (\ell_{r+1},\dots, \ell_n)$, where the $\ell_i$ are chosen generally. For $r\leq t\leq n$, set $L_t = (\ell_{t+1}, \dots, \ell_n)$ and $J_t= \frac{J+L_t}{L_t}$. Then for all $r\leq t\leq n$, we have $\ess(J_t) = \min(t+1, \ess(J))$.
\end{prop}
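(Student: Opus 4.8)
The plan is to prove the identity $\ess(J_t)=\max(t+1,\ess(J))$ by downward induction on $n-t$, peeling off one general hyperplane through the origin at each stage; the single-hyperplane case is a strengthening of \Cref{lem:maximal-ess-dim-section}. The base of the induction is $t=n$, where $L_n=(0)$ and hence $J_n=J$. For $r\le t<n$, assuming the result for $J_{t+1}$ as an ideal of the $(t+2)$-variable ring $R/L_{t+1}$, I would set $H=V(\ell_{t+1})$, observe that $J_t$ is exactly the restriction of $J_{t+1}$ to the image of $H$, and thereby reduce everything to understanding how $\ess$ changes under a single general hyperplane section of an arbitrary homogeneous ideal.

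The core step is therefore to upgrade \Cref{lem:maximal-ess-dim-section} to the statement: for a homogeneous ideal $\af\subseteq S=k[y_0,\dots,y_m]$ and a general hyperplane $H'$ through the origin, $\ess(\af|_{H'})$ is determined by $\ess(\af)$ in the expected way. I would run the incidence-correspondence argument from the proof of \Cref{lem:maximal-ess-dim-section}, first reducing one generator-degree at a time (via the equigenerated criterion \Cref{lem:cone-condition}) to ideals generated in a single degree: for each candidate number $s$ of essential linear forms one forms the variety of pairs (hyperplane $H'$, witnessing coordinate data) for which $\af|_{H'}$ is extended from a subring on $s$ linear forms, and bounds the dimension of its image in the dual projective space by decomposing each generator of $\af$ relative to a prospective point $z$ of $\operatorname{Proj}(S/\af)$ and to a defining form of $H'$ and controlling the linear factors of the resulting tails, exactly as in \Cref{lem:maximal-ess-dim-section}. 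The value is then pinned down by two matching bounds: an upper bound on $\ess(\af|_{H'})$, which is formal because extendedness of $\af$ descends to every hyperplane section via \Cref{lem:ess-dim-sanity-check}; and a lower bound, obtained by changing coordinates so $\af$ involves only $\ess(\af)$ of the $y_i$ and noting that a hyperplane transverse to that coordinate subspace visibly preserves the essential codimension. Iterating the one-section statement $n-t$ times then yields the asserted value of $\ess(J_t)$.

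I expect the main obstacle to be the dimension estimate in this generalized incidence lemma. In \Cref{lem:maximal-ess-dim-section} it was enough to see that every fiber over $\operatorname{Proj}(S/\af)$ is finite, since only one stray hyperplane direction had to be excluded; here one must bound the entire locus of ``bad'' hyperplanes --- those that collapse the essential codimension by more than the generic amount --- and show it is properly contained in the dual projective space, which requires a finer accounting of how many of the generators' linear tails can simultaneously acquire the extra required factor. A second, more routine issue is uniformity: the proposition fixes a single tuple $(\ell_{r+1},\dots,\ell_n)$ and then invokes all of its truncations $L_t$, so I must verify that one sufficiently general choice of the $\ell_i$ lies in the (dense, open) generic locus needed at each stage $t=n-1,\dots,r$ simultaneously --- a finite intersection of dense opens, hence harmless, but needed so that the hyperplane-by-hyperplane induction proves the statement about the fixed $L$ rather than about $n-t$ separately chosen hyperplanes.
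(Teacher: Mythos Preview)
Your induction scheme and the two bounds you describe at the end are exactly the paper's proof. The paper reduces by induction to a single hyperplane section and then splits into two cases: if $\ess(J)=n+1$ it invokes \Cref{lem:maximal-ess-dim-section} verbatim, and if $\ess(J)\le n$ it changes coordinates so that $J$ is extended from $k[x_0,\dots,x_{s-1}]$ and observes that for a general $\ell_n$ (one whose $x_n$-coefficient is nonzero) the quotient $R/(\ell_n)\cong k[x_0,\dots,x_{n-1}]$ identifies $J_{n-1}$ with the extension of the same ideal, whence $\ess(J_{n-1})=s$ by \Cref{lem:ess-dim-sanity-check}. That is precisely your ``formal upper bound'' together with your ``change coordinates and take a transverse hyperplane'' lower bound.

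The ``main obstacle'' you anticipate does not exist. You do \emph{not} need a generalized incidence correspondence parametrizing, for each $s$, the hyperplanes that collapse $\ess$ below $s$: once $\ess(\af)\le m$ the coordinate-change argument already pins $\ess(\af|_{H'})$ to $\ess(\af)$ on the nose, and when $\ess(\af)=m+1$ the original \Cref{lem:maximal-ess-dim-section} suffices. So drop the proposed upgrade and the worry about ``finer accounting of linear tails''; the two easy bounds you already wrote down close the argument.

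On the equigeneration issue: you are right that \Cref{lem:maximal-ess-dim-section} is stated for $d$-forms while the proposition allows arbitrary homogeneous $J$, but your ``one generator-degree at a time'' reduction is not the fix, and in fact does not work as stated (an ideal can have $\ess=n+1$ while each graded piece has smaller essential codimension, e.g.\ $(x_0,x_1^2)$). The actual fix is that the \emph{proof} of \Cref{lem:maximal-ess-dim-section} goes through unchanged for mixed degrees: one still writes $f_i=g_i+x_n h_i$ with $g_i\in\mf_z^{d_i}$, the fiber $B_z$ is still the finite set $\{V(\ell):\ell\mid h_i\text{ for all }i\}$, and some $h_i\neq 0$ since $\ess(J)=n+1$. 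Your uniformity remark about intersecting finitely many dense opens is correct and is all that is needed to make the induction legitimate for a single general tuple $(\ell_{r+1},\dots,\ell_n)$.
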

\begin{proof}
By induction, it suffices to consider the case $t = n-1$. The case $\ess(J) = n+1$ is covered by \Cref{lem:maximal-ess-dim-section}; it remains to show that $\ess(J_{n-1}) = \ess(J)$ provided $\ess(J)\leq n$. Set $s = \ess(J)$ and change coordinates so that $J$ is extended from an ideal $I\subseteq k[x_0,\dots, x_{s-1}]$. Suppose $s \leq n$. Let $I' = Ik[x_0,\dots, x_{n-1}]$. By \Cref{lem:ess-dim-sanity-check}, we have $\ess(I') = \ess(I) = \ess(J)$. The isomorphism $k[x_0,\dots, x_n]/(\ell_n) \cong k[x_0,\dots, x_{n-1}]$ identifies $J_{n-1}$ with $I'$, so $\ess(J_{n-1}) = \ess(I') = \ess(J)$. 
\end{proof}
\subsection{An Application of Gr\"unbaum's Inequality}
\begin{defn}
    Let $K\subseteq \R^n$ be a compact set with $\vol(K) > 0$. The \textbf{centroid} $c$ of $K$ is the arithmetic mean of the points of $K$.
    \[
    c = \vol(K)^{-1}\int_{y\in K}ydy.
    \]
\end{defn}
We first recall Gr\"unbaum's inequality, for which we state an equivalent version below. By a \textbf{half-space}, we mean a set of the form
\[
H^+ = H^+(\mathbf{w}, x):=\{\mathbf{v}\in \R^n: \mathbf{v}\cdot \mathbf{w}\geq x\}.
\]
\begin{theorem}[\cite{grunbaum_partitions_1960}, Theorem 2]\label{thm:grunbaum-ineq}
    Let $K\subseteq \R^n$ be a convex body with positive volume and let $c$ denote the centroid of $K$. Let $H^+$ be a half-space whose boundary hyperplane $H$ contains $c$. Then \[\vol(H^+\cap K) \leq \left(1 - \left(\frac{n}{n+1}\right)^n\right)\vol(K).\]
\end{theorem}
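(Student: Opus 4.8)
I would follow the classical symmetrization argument. After a translation we may assume the centroid $c$ is the origin, and after a rotation we may assume $H = \{x_n = 0\}$ and $H^+ = \{x_n \ge 0\}$; the case $n = 1$ is immediate since the centroid of a segment is its midpoint, so assume $n \ge 2$. For $t$ in the support $[\alpha,\beta]$ of the slicing function, set $A(t) = \vol_{n-1}\bigl(K \cap \{x_n = t\}\bigr)$. By Brunn's theorem (a consequence of the Brunn--Minkowski inequality), $t \mapsto A(t)^{1/(n-1)}$ is concave on $[\alpha,\beta]$; the hypothesis that $c \in H$ becomes $\int_\alpha^\beta t\,A(t)\,dt = 0$, which in particular forces $\alpha < 0 < \beta$ (as $A \ge 0$ is not identically zero); and the claimed bound becomes the one-dimensional inequality
\[
\int_0^\beta A(t)\,dt \;\le\; \left(1 - \left(\tfrac{n}{n+1}\right)^n\right)\int_\alpha^\beta A(t)\,dt .
\]
Equivalently, one may first Schwarz-symmetrize $K$ about the $x_n$-axis into a solid of revolution with concave radius function $\phi = (A/\omega_{n-1})^{1/(n-1)}$, which changes none of the relevant quantities.

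It then suffices to prove: if $\phi \colon [\alpha,\beta] \to \R_{\ge 0}$ is concave and not identically zero with $\alpha \le 0 \le \beta$ and $\int_\alpha^\beta t\,\phi(t)^{n-1}\,dt = 0$, then $\int_0^\beta \phi^{n-1} \le \bigl(1 - (n/(n+1))^n\bigr)\int_\alpha^\beta \phi^{n-1}$. The extremal profile is the cone $\phi(t) = c\,(t-\alpha)$: substituting $u = t-\alpha$ shows the first-moment condition then forces exactly $\alpha = -n\beta$, and a direct integration gives $\int_0^\beta \phi^{n-1}\big/\int_\alpha^\beta \phi^{n-1} = \bigl((n+1)^n - n^n\bigr)/(n+1)^n = 1 - (n/(n+1))^n$, so the cone meets the bound with equality. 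To see it is maximal, I would either argue variationally (a maximizer of the scale-invariant ratio is necessarily affine on its support and vanishes at the left endpoint, which by the moment condition pins it down to the above cone), or use Grünbaum's comparison: letting $\tilde g$ be the cone density with the same mass and the same (zero) first moment, the difference $h = \tilde g - \phi^{n-1}$ satisfies $\int h = \int t\,h = 0$ and has at most two sign changes, so pairing $h$ against a suitably chosen affine function (against which $h$ integrates to zero) yields $\int_0^\beta h \ge 0$.

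The reductions in the first paragraph and the explicit cone computation are routine. The main obstacle is the last step: establishing that the cone is extremal, i.e.\ pinning down the sign pattern of $h$ relative to the hyperplane $\{x_n = 0\}$ (equivalently, proving the variational characterization of the maximizer). This is precisely the technical content of Grünbaum's theorem, and I expect it to require careful bookkeeping of the two supports $[\alpha,\beta]$ and that of the comparison cone, including the correct choice of the cone's scale so that the sign changes of $h$ are positioned favourably with respect to the origin.
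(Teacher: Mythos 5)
First, note that the paper does not prove this statement at all: it is imported as a black box, cited to Gr\"unbaum's 1960 paper (with the equality case, needed later, cited separately to Myroshnychenko--Stephen--Zhang). So there is no internal argument to compare yours against; what can be assessed is whether your sketch actually constitutes a proof, and it does not. Your reductions are fine and standard (translate the centroid to the origin, slice along $x_n$, invoke Brunn's theorem for concavity of $A^{1/(n-1)}$, Schwarz-symmetrize), and your computation that the cone with $\alpha=-n\beta$ attains the ratio $1-\bigl(\tfrac{n}{n+1}\bigr)^n$ is correct. But the entire content of the theorem is the extremality of the cone, and that step you explicitly defer, offering only two unexecuted strategies.

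Moreover, the closing argument you do sketch does not close as stated. If $\tilde g$ is a cone profile with the same mass and the same (zero) first moment as $\phi^{n-1}$, then for $h=\tilde g-\phi^{n-1}$ one has $\int(at+b)\,h\,dt=a\int t\,h\,dt+b\int h\,dt=0$ for \emph{every} affine function, so ``pairing $h$ against a suitably chosen affine function against which $h$ integrates to zero'' carries no information by itself; the two matched moments only yield conclusions such as $\int q\,h\ge 0$ for a quadratic $q$ vanishing at the two sign changes. To deduce the needed inequality $\int_{t\ge 0}h\ge 0$ you must control where the (at most) two sign changes of $h$ lie relative to the origin, and this depends on which cone you take from the one-parameter family with matched mass and first moment --- a choice you leave unspecified. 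The classical way out is different in an essential respect: one normalizes the comparison cone by matching the slice at the cutting hyperplane (or performs two successive one-sided cone replacements, each justified by a single-crossing argument for a concave radius versus a linear one), and tracks the resulting monotone motion of the centroid, rather than matching the first moment outright. Until that positioning argument (or the variational characterization, including existence of a maximizer) is actually carried out, the proposal is an outline of Gr\"unbaum's strategy with its decisive step missing, not a proof.
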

\begin{defn}
    We let $\Mmc_n$ denote the quantity $\left(1 - \left(\frac{n}{n+1}\right)^n\right)$ from the theorem.
\end{defn}
For our purposes, we must characterize the equality case of \Cref{thm:grunbaum-ineq}. 
\begin{prop}\label{prop:grunbaum-eq-classification}
    Suppose $H^+,H, K$ are as in \Cref{thm:grunbaum-ineq}, with $\vol(K) > 0$ and $\vol(H^+\cap K) = \Mmc_n\vol(K)$. Let $H$ denote the boundary hyperplane of $H^+$. Then there exists a convex body $K'\subseteq H^+\cap K$ and a point $q\in K\setminus H^+$ such that $K'$ is contained in a hyperplane parallel to $H$ and $K = \conv(K'\cup \{q\})$.  
\end{prop}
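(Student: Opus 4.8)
The plan is to reduce to a one–dimensional statement by slicing with hyperplanes parallel to $H$ and using the Brunn--Minkowski inequality, and then to extract the equality case of \Cref{thm:grunbaum-ineq}. After an affine change of coordinates I may assume $H = \{x_1 = 0\}$, $H^+ = \{x_1 \geq 0\}$, and that the centroid $c$ of $K$ is the origin. Let $f(t) = \vol_{n-1}\bigl(K \cap \{x_1 = t\}\bigr)$. This function is supported on a compact interval $[a,b]$, and since the centroid of the measure $f(t)\,dt$ lies in the interior of the convex hull of its support, $a < 0 < b$. By Brunn--Minkowski, $g := f^{1/(n-1)}$ is concave on $[a,b]$ and positive on $(a,b)$, and $f$ (hence $g$) is continuous on $[a,b]$. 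In these terms the hypothesis $\vol(H^+ \cap K) = \Mmc_n \vol(K)$ reads $\int_0^b g^{n-1}\,dt = \Mmc_n \int_a^b g^{n-1}\,dt$, while the centroid condition reads $\int_a^b t\, g^{n-1}\,dt = 0$.

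The heart of the argument is the following one–dimensional claim: if $g$ is concave, nonnegative, and continuous on $[a,b]$ with $a < 0 < b$, and $\int_a^b t\, g^{n-1}\,dt = 0$ and $\int_0^b g^{n-1}\,dt = \Mmc_n \int_a^b g^{n-1}\,dt$, then $g$ is affine on $[a,b]$ with $g(a) = 0$ (so $g(b) > 0$) and $-a = \tfrac{n}{n+1}(b-a)$. The inequality $\int_0^b g^{n-1}\,dt \leq \Mmc_n \int_a^b g^{n-1}\,dt$ follows from \Cref{thm:grunbaum-ineq} applied to the body of revolution $\{(t,y) \in [a,b]\times\R^{n-1} : \lvert y\rvert \leq g(t)\}$, whose centroid lies on $H$; and once $g$ is known to be affine, substituting $g(t) = \a + \b t$ into the two integral conditions is a finite computation, which shows that the ratio $\int_0^b g^{n-1}/\int_a^b g^{n-1}$ depends only on a single scale–invariant slope parameter, is strictly increasing in that parameter over its admissible (closed) range, and attains the value $\Mmc_n$ exactly at the cone $g(t) = c\,(t - a)$ with $-a/(b-a) = n/(n+1)$; in particular the reversed cone with $g(b) = 0$ is excluded because it gives the strictly smaller value $1 - \Mmc_n = (n/(n+1))^n$ when $n \geq 2$, and the case $n = 1$ (where $K$ is a segment with midpoint on $H$) is immediate. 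The remaining point---that equality forces $g$ to be affine---is where I expect the real difficulty to lie. One route is to revisit the cone–comparison proof of Gr\"unbaum's inequality and record when every intermediate estimate is an equality. A route with cleaner logical dependencies is to argue by contradiction: if $g$ is concave but not affine, then on some subinterval $[p,q] \subset (a,b)$ its graph lies strictly above its chord; one replaces $g$ on $[p,q]$ by that chord and then translates and rescales the resulting concave profile (rescaling leaves the ratio unchanged, translation restores the centroid) to produce a concave profile $\tilde g$ with centroid $0$ and $\int_0^{\tilde b}\tilde g^{n-1}/\int_{\tilde a}^{\tilde b}\tilde g^{n-1} > \Mmc_n$, contradicting \Cref{thm:grunbaum-ineq} for the associated body of revolution. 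The delicate part of this route is arranging the modification so that the net change in the ratio is \emph{strictly positive}, since cutting $g$ down changes both the mass and the centroid and the required recentering can partly cancel the gain; making this precise is the main obstacle.

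Finally I would translate the one–dimensional conclusion back to $K$. Write $K_t = K \cap \{x_1 = t\}$, identified with a compact convex subset of $\R^{n-1}$, and let $C := \conv(K_a \cup K_b) \subseteq K$. For $t = (1-\l)a + \l b$ with $\l \in [0,1]$, the slice $C_t$ is the Minkowski combination $(1-\l)K_a + \l K_b$, so $\vol_{n-1}(C_t) \geq \l^{n-1}\vol_{n-1}(K_b) = \l^{n-1} g(b)^{n-1} = g(t)^{n-1} = \vol_{n-1}(K_t)$, where the middle equality uses that $g$ is affine with $g(a) = 0$. Since $C_t \subseteq K_t$, equality holds throughout, so $C_t = K_t$ for every $t$ and therefore $K = C$; moreover the equality $\vol_{n-1}\bigl((1-\l)K_a + \l K_b\bigr) = \l^{n-1}\vol_{n-1}(K_b)$ for all $\l$ forces $K_a$ to be a single point $q$, whose $x_1$–coordinate is $a < 0$, so that $q \in K \setminus H^+$. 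Taking $K' := K_b$---which lies in the hyperplane $\{x_1 = b\}$ parallel to $H$, is contained in $H^+ \cap K$ since $b > 0$, and has positive $(n-1)$–volume since $g(b) > 0$---we obtain $K = \conv(K' \cup \{q\})$, as required.
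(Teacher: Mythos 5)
Your overall strategy---slice $K$ by hyperplanes parallel to $H$, use Brunn--Minkowski to get a concave profile $g = f^{1/(n-1)}$, characterize the extremal one-dimensional profiles, and then lift the affine profile back to the cone structure of $K$---is a reasonable self-contained route, and the outer two layers are essentially sound. (The lifting step does need one more line: the equality $\vol_{n-1}((1-\lambda)K_a + \lambda K_b) = \lambda^{n-1}\vol_{n-1}(K_b)$ for all $\lambda$ forces $K_a$ to be a point via the equality case of Brunn--Minkowski with a degenerate summand, or via positivity of the mixed volume $V(K_a, K_b,\dots,K_b)$ when $K_a$ contains a segment and $K_b$ has interior; you assert this without justification, but it is minor.)

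The genuine gap is the step you yourself flag: that equality $\int_0^b g^{n-1} = \Mmc_n \int_a^b g^{n-1}$ under the centroid constraint forces the concave profile $g$ to be affine with $g(a)=0$. This one-dimensional claim \emph{is} the equality case of Gr\"unbaum's inequality---it is the entire content of the proposition once the routine reductions are made---and you do not prove it; you list two candidate strategies and concede that making the second precise is ``the main obstacle.'' As sketched, the perturbation argument does not obviously close: replacing $g$ by its chord on a subinterval removes mass whose location relative to $0$ is not controlled, so the recentering can shift the centroid toward either endpoint, and the half-space mass ratio after translating back need not strictly increase; you would have to track simultaneously the loss of total mass, the displacement of the centroid, and the loss of mass on one side, and no such estimate is given. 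The first strategy (rerunning a proof of \Cref{thm:grunbaum-ineq} and recording when every inequality is tight) is viable but is precisely the work that is omitted. So as written the crux is assumed rather than established. Note that the paper sidesteps all of this by invoking the known characterization of equality, \cite{myroshnychenko_grunbaums_2018}, Corollary~8; if you want a proof independent of that reference, you must either carry out the equality-tracking in the cone-comparison proof in full, or cite such a characterization explicitly.
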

\begin{proof}
    Follows from \cite{myroshnychenko_grunbaums_2018}, Corollary 8.
\end{proof}
\begin{defn}\label{defn:pi_n-and-T_n}
    Let $\pi_n: \R^{n+1}\to \R^n$ denote the projection onto the first $n$ coordinates and let $T_n:= \pi_n(\Delta_n)$.
\end{defn}
\begin{lemma}\label{lem:grunbaum-eq-for-simplices}
   Let $z_n = (\frac{1}{n+1},\dots, \frac{1}{n+1})$ denote the centroid of $T_n$. Let $H^+$ be a half-space whose boundary hyperplane $H$ contains $z_n$. Then \[\vol(H^+\cap T_n) \leq \frac{\Mmc_n}{n!}\] with equality if and only if $H$ is parallel to a facet $F$ of $T_n$ with $F\subseteq H^+$.
\end{lemma}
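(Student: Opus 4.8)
The upper bound is immediate: $z_n$ is the centroid of $T_n$ because the centroid of a simplex is the average of its vertices and $T_n = \pi_n(\Delta_n) = \conv(0, e_1, \dots, e_n)$ (with $e_1,\dots,e_n$ the standard basis of $\R^n$); hence \Cref{thm:grunbaum-ineq} applied to the convex body $K = T_n$ gives $\vol(H^+\cap T_n)\le \Mmc_n\vol(T_n) = \Mmc_n/n!$, using $\vol(T_n) = 1/n!$. So the real content is the equality case, which I would deduce from \Cref{prop:grunbaum-eq-classification}.

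For the \emph{only if} direction, suppose $\vol(H^+\cap T_n) = \Mmc_n/n!$. Applying \Cref{prop:grunbaum-eq-classification} with $K = T_n$ produces a convex body $K'\subseteq H^+\cap T_n$ lying in a hyperplane $P$ parallel to $H$, together with a point $q\in T_n\setminus H^+$ such that $T_n = \conv(K'\cup\{q\})$. Since $K'$ is compact, every extreme point of $T_n$ lies in $K'\cup\{q\}$; but $T_n$ has exactly the $n+1$ vertices $v_0,\dots,v_n$ as extreme points, and $q\notin H^+\supseteq K'$ forces $q\notin K'$. If all $v_i$ lay in $K'$ then $K'\supseteq T_n$, contradicting $q\notin K'$; hence exactly one vertex equals $q$, say $q = v_j$, and the remaining $n$ vertices lie in $K'$. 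By convexity the facet $F_j := \conv(\{v_i : i\ne j\})$ is contained in $K'$. Now $\operatorname{aff}(F_j)$ is a hyperplane contained in $P$, hence $P = \operatorname{aff}(F_j)$, so $K'\subseteq T_n\cap\operatorname{aff}(F_j) = F_j$, and therefore $K' = F_j$. Since $P$ is parallel to $H$ and $F_j = K'\subseteq H^+$, the hyperplane $H$ is parallel to the facet $F_j$ with $F_j\subseteq H^+$, as required.

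For the \emph{if} direction, I would use the fact that the affine symmetry group of $T_n$ is $S_{n+1}$, acting transitively on the vertices by volume-preserving affine maps fixing the centroid $z_n$ (because $T_n$ is affinely isomorphic to a regular simplex). This lets me assume $F = F_0 := \conv(e_1,\dots,e_n)$, which spans the hyperplane $\{x_1+\dots+x_n = 1\}$. A hyperplane parallel to this one through $z_n = \bigl(\tfrac1{n+1},\dots,\tfrac1{n+1}\bigr)$ must be $H = \{x_1+\dots+x_n = \tfrac{n}{n+1}\}$, and $F_0\subseteq H^+$ forces $H^+ = \{x_1+\dots+x_n\ge \tfrac n{n+1}\}$. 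Then $T_n\setminus H^+$ agrees up to a null set with the dilate $\tfrac n{n+1}T_n$, so $\vol(T_n\setminus H^+) = \bigl(\tfrac n{n+1}\bigr)^n/n!$ and hence $\vol(H^+\cap T_n) = \tfrac1{n!}\bigl(1 - \bigl(\tfrac n{n+1}\bigr)^n\bigr) = \Mmc_n/n!$.

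The only nontrivial input beyond bookkeeping is \Cref{prop:grunbaum-eq-classification} (the Myroshnychenko–Yaskin classification), which is available; the points in the argument above that need a little care are the identification $P = \operatorname{aff}(F_j)$ together with $T_n\cap\operatorname{aff}(F_j) = F_j$, and the symmetry reduction in the converse, but both are routine.
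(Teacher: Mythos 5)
Your proof is correct and follows essentially the same route as the paper: the bound comes from \Cref{thm:grunbaum-ineq}, and the equality case comes from \Cref{prop:grunbaum-eq-classification} together with the observation that, for the simplex $T_n = \conv(K'\cup\{q\})$, the set $K'$ must be a facet. You additionally spell out the vertex/extreme-point bookkeeping and the converse volume computation (dilation by $\tfrac{n}{n+1}$ about the opposite vertex), which the paper leaves implicit.
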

\begin{proof}
    If $K'$ is an $n-1$-dimensional convex set and $q$ a point not contained in the hyperplane supporting $K'$ such that $\conv(K'\cup \{q\})$ is a polytope, then $K'$ is a facet of $\conv(K'\cup \{q\})$. The result therefore follows from \Cref{prop:grunbaum-eq-classification}.
\end{proof}
We recall the following standard fact from convex analysis:
\begin{lemma}[\cite{Rockafellar_1970}, Corollary 11.6.1]\label{lem:supporting-hyperplane}
    Let $K\subseteq \R^n$ be a convex set and $x\in \del K$. Then there exists a half-space $H^+$ such that $K\subseteq H^+$ and such that $x\in \del H^+$.
\end{lemma}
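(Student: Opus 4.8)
The statement is the classical supporting hyperplane theorem, so the shortest proof is simply to invoke \cite{Rockafellar_1970}; below is the self-contained argument I would write out if desired. The first step is to reduce to the case that $K$ is closed. For a convex set one has $\del K = \del\bar K$ (both equal $\bar K\setminus\mathrm{int}(K)$, using the standard fact $\mathrm{int}(\bar K)=\mathrm{int}(K)$ for convex $K$), and any half-space containing $\bar K$ certainly contains $K$; so I may replace $K$ by $\bar K$ and assume from now on that $K$ is closed, convex, and $x\in\del K$. (If $K=\emptyset$ or $K=\R^n$ there is nothing to prove, since then $\del K=\emptyset$.)

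The key tool is nearest-point projection onto the nonempty closed convex set $K$: every $y\in\R^n$ has a unique closest point $p(y)\in K$, characterized by $\langle y-p(y),\,z-p(y)\rangle\le 0$ for all $z\in K$. Since $x\in\del K$ we have $x\notin\mathrm{int}(K)$, so I can choose a sequence $x_m\to x$ with $x_m\notin K$. Set $u_m=\big(x_m-p(x_m)\big)/\|x_m-p(x_m)\|$, a well-defined unit vector because $x_m\notin K$ forces $x_m\ne p(x_m)$. Applying the projection characterization at $y=x_m$, dividing by $\|x_m-p(x_m)\|$, and using $\langle u_m,p(x_m)\rangle=\langle u_m,x_m\rangle-\|x_m-p(x_m)\|\le\langle u_m,x_m\rangle$, one obtains $\langle u_m,z\rangle\le\langle u_m,x_m\rangle$ for every $z\in K$.

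Finally, by compactness of the unit sphere I pass to a subsequence along which $u_m\to u$ with $\|u\|=1$, and let $m\to\infty$ in the previous inequality with $z\in K$ held fixed (using $x_m\to x$) to get $\langle u,z\rangle\le\langle u,x\rangle$ for all $z\in K$. Then $H^+:=\{\,w\in\R^n:\langle u,w\rangle\le\langle u,x\rangle\,\}$ satisfies $K\subseteq H^+$, and its boundary hyperplane $\{\,w:\langle u,w\rangle=\langle u,x\rangle\,\}$ contains $x$, so $x\in\del H^+$, as required. The only points needing any care are the reduction to the closed case (handled by the identity $\del K=\del\bar K$) and the observation that $x_m\notin K$ keeps $u_m$ well defined; everything else is the routine projection-and-compactness separation argument.
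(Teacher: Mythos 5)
Your proof is correct. For this lemma the paper offers no argument of its own: it is stated as a recalled standard fact and the ``proof'' is just the citation to Rockafellar, Corollary 11.6.1. Your write-up supplies the standard self-contained argument instead: reduce to $K$ closed via $\del K=\del\bar K$ (your justification covers both the case $\mathrm{int}(K)\neq\emptyset$ and the degenerate case where $K$ lies in a hyperplane), then run the nearest-point projection characterization $\langle y-p(y),z-p(y)\rangle\le 0$ on a sequence $x_m\to x$ with $x_m\notin K$, and pass to a convergent subsequence of the unit normals $u_m$. All the steps check out, including the two places that actually need care: $x_m\ne p(x_m)$ so the $u_m$ are defined, and $\|u\|=1$ so the limiting half-space is genuine and its boundary hyperplane through $x$ is really $\del H^+$. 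The only difference from the cited source is methodological: Rockafellar derives Corollary 11.6.1 from his separation machinery (relative interiors and proper separation, Theorem 11.6), which yields the sharper statement that one can support at any point outside the relative interior by a \emph{non-trivial} supporting hyperplane; your elementary projection-plus-compactness argument gives exactly the form of the statement used in \Cref{lem:vol-upper-bound}, which is all the paper needs.
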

\begin{lemma}\label{lem:vol-upper-bound}
    Let $P\subseteq T_n$ be a closed convex set with $z_n$ not in the interior of $P$. Then $\vol(P) \leq \Mmc_n/n!$ with equality if and only if $P$ is the intersection of $T_n$ with a half-space satisfying the conditions of \Cref{lem:grunbaum-eq-for-simplices}.
\end{lemma}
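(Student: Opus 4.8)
The plan is to reduce everything to Gr\"unbaum's inequality for the simplex, i.e.\ to \Cref{lem:grunbaum-eq-for-simplices}, by supporting $P$ with a hyperplane passing through the centroid $z_n$ of $T_n$.

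First I would prove the inequality. If $\vol(P)=0$ there is nothing to show, so assume $\vol(P)>0$; then $P$ is $n$-dimensional. Since $P$ is closed and convex and $z_n$ does not lie in its interior, the supporting/separating hyperplane theorem produces a hyperplane $H\ni z_n$ and a closed half-space $H^+$ with boundary $H$ such that $P\subseteq H^+$: when $z_n\in\del P$ this is exactly \Cref{lem:supporting-hyperplane}, and when $z_n\notin P$ one strictly separates $z_n$ from the compact set $P$ and then translates the separating hyperplane parallel to itself so as to pass through $z_n$, which keeps $P$ on one closed side. Then $P\subseteq H^+\cap T_n$, and since $z_n$ is the centroid of $T_n$, \Cref{lem:grunbaum-eq-for-simplices} yields $\vol(P)\leq\vol(H^+\cap T_n)\leq\Mmc_n/n!$.

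For the equality case, the ``if'' direction is immediate: if $P = H^+\cap T_n$ with $H=\del H^+$ satisfying the conditions of \Cref{lem:grunbaum-eq-for-simplices}, then $z_n\in H$ forces $z_n$ out of the interior of $P$, and the equality clause of that lemma gives $\vol(P)=\Mmc_n/n!$. For the ``only if'' direction, assume $\vol(P)=\Mmc_n/n!$ and take $H,H^+$ as in the previous paragraph, so that $P\subseteq H^+\cap T_n$ with equal, finite, positive volume; in particular $H^+\cap T_n$ is $n$-dimensional. Here I would invoke the elementary rigidity fact that a closed convex set contained in an $n$-dimensional closed convex set of the same finite volume must coincide with it: if some $q$ lay in $(H^+\cap T_n)\setminus P$, closedness of $P$ would give a ball about $q$ missing $P$, while convexity and full-dimensionality of $H^+\cap T_n$ place interior points of $H^+\cap T_n$ arbitrarily close to $q$, producing a positive-volume ball inside $(H^+\cap T_n)\setminus P$ and contradicting $\vol(P)=\vol(H^+\cap T_n)$. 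Thus $P = H^+\cap T_n$, and since $\vol(H^+\cap T_n)=\Mmc_n/n!$, the equality clause of \Cref{lem:grunbaum-eq-for-simplices} shows that $H$ satisfies the required conditions.

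I do not expect a serious obstacle. The only points demanding a little care are the uniform construction of a supporting hyperplane through $z_n$ in the two cases $z_n\in\del P$ and $z_n\notin P$ (as stated, \Cref{lem:supporting-hyperplane} covers only boundary points), and the measure-theoretic rigidity that containment together with equal volume forces equality of convex bodies; both are standard.
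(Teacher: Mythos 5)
Your proof is correct and follows essentially the same route as the paper: reduce to a half-space $H^+$ through $z_n$ containing $P$ via the supporting/separating hyperplane theorem, then apply \Cref{lem:grunbaum-eq-for-simplices} to $H^+\cap T_n$. The only differences are cosmetic — the paper handles the case $z_n\notin P$ by fattening $P$ until $z_n$ lies on its boundary rather than by translating a separating hyperplane, and it leaves implicit the equal-volume rigidity step ($P\subseteq H^+\cap T_n$ with equal positive volume forces $P=H^+\cap T_n$) that you spell out.
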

\begin{proof}
    If $z_n\notin \del P$, then for $0 < \varepsilon < \text{dist}(P, z_n)$, the set $\{x\in T_n: \text{dist}(x,P) \leq \varepsilon\}$ is a strictly larger convex set which does not contain $z_n$ in its interior. We may therefore assume $z_n\in \del P$. By \Cref{lem:supporting-hyperplane}, we may replace $P$ by $H^+\cap T_n$, where $H^+$ is the half-space containing $P$ with $z_n\in \del H^+$. In this case, the result is immediate from \Cref{lem:grunbaum-eq-for-simplices}.
\end{proof}
\details{To see that $P_\varepsilon:=\{x\in T_n: \text{dist}(x, P)\leq \varepsilon\}$ is convex, note that $P_\varepsilon$ is the Minkowski sum of $P$ with the closed ball of radius $\varepsilon$, and the Minkowski sum of two convex sets is convex.}
\subsection{Proof of Theorem \texorpdfstring{\ref{thm:equigen-fpt}}{A}}
To start, we recall a theorem of Rees.
\begin{theorem}[\cite{huneke_integral}, Proposition 11.2.1, Theorem 11.3.1]\label{thm:Rees}
    Let $(R,\mf)$ be a formally equidimensional local ring and $I\subseteq J$ two $\mf$-primary ideals. Then $e(I) = e(J)$ if and only if $\bar{I} = \bar{J}$. 
\end{theorem}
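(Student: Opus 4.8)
The equivalence splits into two implications, and the forward one, $\overline{I}=\overline{J}\Rightarrow e(I)=e(J)$, is elementary: since $I\subseteq J\subseteq\overline{J}=\overline{I}$, every generator of $J$ is integral over $I$, so $I$ is a reduction of $J$; thus $J^{n+1}=IJ^{n}$ for some $n$, whence $J^{m+n}\subseteq I^{m}\subseteq J^{m}$ for all $m$, and comparing the leading coefficients of the Hilbert--Samuel functions gives $e(I)=e(J)$. This uses no equidimensionality hypothesis and is the content of the cited \cite[Proposition~11.2.1]{huneke_integral}.

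For the converse the plan is to reduce to a one-generator statement: \emph{if $(R,\mf)$ is formally equidimensional, $\af$ is $\mf$-primary, and $x\in R$, then $x\in\overline{\af}$ if and only if $e(\af)=e((\af,x))$} (its ``only if'' direction is an instance of what was just proved). Granting this, write $J=(I,x_1,\dots,x_r)$ and show by induction that every $x_i\in\overline{I}$: if $x_1,\dots,x_{i-1}\in\overline{I}$, put $\af=(I,x_1,\dots,x_{i-1})$, so that $\overline{\af}=\overline{I}$ and $\af$ is $\mf$-primary; from $I\subseteq\af\subseteq(\af,x_i)\subseteq J$ and $e(I)=e(J)$ all four multiplicities coincide, so $e(\af)=e((\af,x_i))$ and the one-generator statement yields $x_i\in\overline{\af}=\overline{I}$. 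Hence $J\subseteq\overline{I}$ and $\overline{J}=\overline{I}$.

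It then remains to show $x\notin\overline{\af}\Rightarrow e((\af,x))<e(\af)$. Completion preserves Hilbert--Samuel multiplicity and formal equidimensionality, and --- this is precisely where that hypothesis enters --- forces every minimal prime $Q$ of $\widehat R$ to satisfy $\dim\widehat R/Q=d:=\dim R$, while keeping integral closure compatible both with completion and with passage to the $\widehat R/Q$. The associativity formula $e(\mathfrak b;\widehat R)=\sum_{Q}\ell_{\widehat R_Q}(\widehat R_Q)\,e(\mathfrak b(\widehat R/Q);\widehat R/Q)$, together with the term-by-term inequality forced by $\af\subseteq(\af,x)$, lets one check both the multiplicity equality and membership in the integral closure one $Q$ at a time, so we may assume $R$ is a complete local domain. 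Then $\overline R$ is module-finite over $R$, and by the valuative criterion $x\notin\overline{\af}$ means some Rees valuation $v$ of $\af$ has $v(x)<v(\af)$. Choose a normal, proper, birational $W\to\Spec\overline R$ dominating the blowups of both $\af$ and $(\af,x)$, so that $\af\mathcal O_W=\mathcal O_W(-D)$ and $(\af,x)\mathcal O_W=\mathcal O_W(-D')$ with $D\geq D'\geq 0$ effective Cartier divisors supported on the exceptional fibre, each pure of dimension $d-1$, and with $D\neq D'$ because $v$ --- an order of vanishing along some component $E$ of the fibre --- takes different values on the two ideals. The Hilbert--Samuel multiplicity of an $\mf$-primary ideal equals, up to a fixed positive factor independent of the ideal, the self-intersection number $(-1)^{d-1}(D^{d})$ of its exceptional divisor on $W$; writing $\Delta=D-D'\geq 0$ (a nonzero effective cycle, pure of dimension $d-1$) and telescoping,
\[
(-1)^{d-1}\bigl(D^{d}-{D'}^{d}\bigr)=\Delta\cdot\sum_{i=0}^{d-1}(-D)^{i}(-D')^{d-1-i}\;\geq\;\Delta\cdot(-D)^{d-1}\;>\;0,
\]
since $-D$ is $\Spec\overline R$-ample (hence strictly positive on every component of the fibre, in particular against $\Delta$), $-D'$ is nef over $\Spec\overline R$, and intersections of nef classes with effective cycles are nonnegative. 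Therefore $e((\af,x))<e(\af)$. (Rees's original argument reaches the same strict inequality without intersection theory, via a length computation over a transcendental extension of the residue field.)

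The main obstacle is exactly this last strict inequality: that enlarging an $\mf$-primary ideal past its integral closure strictly decreases the Hilbert--Samuel multiplicity. Formal equidimensionality is indispensable there --- it is what legitimizes the reduction to complete local domains through the associativity formula, and the theorem genuinely fails without it. The remaining ingredients --- compatibility of integral closure with completion and with minimal primes, and the expression of the multiplicity as a self-intersection number on a normalized blowup --- are standard but substantial, and are best invoked rather than reproved.
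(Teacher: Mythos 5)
The paper does not prove this statement at all: it is Rees's classical theorem, quoted from Huneke--Swanson, so your write-up can only be compared with the standard proofs in the literature. Your architecture is that standard one and is essentially sound: the forward implication via reductions and Hilbert--Samuel functions; the reduction of the converse to the one-generator statement by the multiplicity squeeze $e(I)\ge e(\af)\ge e((\af,x_i))\ge e(J)$; and the passage, via completion, the associativity formula, and compatibility of integral closure with $\widehat R$ and with the quotients $\widehat R/Q$, to the key assertion that in a complete local domain adjoining an element outside $\overline{\af}$ strictly drops the multiplicity --- with formal equidimensionality entering exactly where you say it does, namely in guaranteeing that the minimal prime where the drop occurs actually appears in the associativity sum.

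The one genuine gap is in your justification of that strict drop. On a normal model $W$ dominating the blowups of both $\af$ and $(\af,x)$, the class $-D$ with $\af\mathcal{O}_W=\mathcal{O}_W(-D)$ is only the pullback of the relatively ample $\mathcal{O}(1)$ from the blowup of $\af$; it is nef and semiample over $\Spec \overline{R}$ but in general \emph{not} ample, so it may be numerically trivial on components of the exceptional fibre, and the assertion that it is ``strictly positive \dots in particular against $\Delta$'' fails as stated; $\Delta\cdot(-D)^{d-1}>0$ does not follow. The repair is available from ingredients you already have: since $v(x)<v(\af)$ for some Rees valuation $v$ of $\af$, the prime divisor $E\subseteq W$ with $\operatorname{ord}_E=v$ occurs in $\Delta=D-D'$ with coefficient $v(\af)-v(x)>0$, and $E$ is not contracted by the map $\mu$ from $W$ to the normalized blowup of $\af$ (its center there is a divisor, by definition of Rees valuations); writing $-D=\mu^*A$ with $A$ relatively ample, the projection formula gives $\Delta\cdot(-D)^{d-1}\ge (v(\af)-v(x))\,A^{d-1}\cdot\mu_*E>0$, while the remaining telescoping terms are $\ge 0$ by nefness against an effective cycle proper over the closed point. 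Separately, the identity ``$e(\af)$ equals $(-1)^{d-1}(D^{d})$ up to the residue-field factor'' for an arbitrary complete local domain (any residue characteristic) is itself a substantive theorem --- it requires $e(\af)=\lim_m d!\,\ell(R/\overline{\af^{m}})/m^{d}$ for analytically unramified rings together with Snapper--Kleiman intersection theory over $\Spec\overline{R}$ --- so it should be cited precisely rather than treated as folklore; Rees's original length-theoretic argument, which you mention, is how the cited source avoids this machinery altogether.
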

The conclusion of \Cref{thm:Rees} also holds when $R$ is a polynomial ring, $\mf\subseteq R$ a maximal ideal, and $I,J$ two $\mf$-primary ideals. As a consequence, we may restate the conclusion of \Cref{thm:equigen-fpt} in terms of essential dimension.
\begin{lemma}\label{lem:ess-dim-equals-height}
    Let $R = k[x_0,\dots, x_n]$ and $I\subseteq R$ an ideal generated by $d$-forms. If $\hgt(I) = h$, then the following are equivalent:
    \begin{enumerate}[(i)]
        \item  $\ess(I) = h$.
        \item $\overline{I} = (x_0,\dots, x_{h-1})^d$ up to change of coordinates. 
        \item $I\subseteq (x_0,\dots, x_{h-1})^d$ up to change of coordinates. 
    \end{enumerate}
\end{lemma}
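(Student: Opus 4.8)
The plan is to prove the three conditions equivalent through the cycle $(ii)\Rightarrow(iii)\Rightarrow(i)\Rightarrow(ii)$; only the last implication has real content. The implication $(ii)\Rightarrow(iii)$ is immediate from $I\subseteq\overline I$. For $(iii)\Rightarrow(i)$, I would change coordinates so that $I\subseteq(x_0,\dots,x_{h-1})^d$ and note that a monomial of total degree $d$ lying in $(x_0,\dots,x_{h-1})^d$ must have all of its degree in the variables $x_0,\dots,x_{h-1}$; hence the degree-$d$ component of $(x_0,\dots,x_{h-1})^d$ is exactly the space of $d$-forms of $k[x_0,\dots,x_{h-1}]$, so the $d$-form generators of $I$ lie in $k[x_0,\dots,x_{h-1}]$ and $\ess(I)\le h$. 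The reverse inequality $\ess(I)\ge\hgt(I)=h$ holds in general: if $I$ is extended from an ideal of $k[\ell_1,\dots,\ell_s]$ with the $\ell_i$ linearly independent, then completing the $\ell_i$ to a system of coordinates presents $R/I$ as a polynomial ring in $n+1-s$ variables over a nonzero ring, so $\dim R/I\ge n+1-s$ and $\hgt(I)\le s$; taking $s=\ess(I)$ gives the bound. Thus $\ess(I)=h$.

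For the substantive implication $(i)\Rightarrow(ii)$, the first move is to translate $\ess(I)=h$ into a statement about an $\mf'$-primary ideal. Change coordinates so that $I=I'R$ for a homogeneous ideal $I'$ in $S:=k[x_0,\dots,x_{h-1}]$. Since $I$ has no forms of degree less than $d$, neither does $I'$, and therefore the degree-$d$ component of $I$ coincides with $[I']_d\subseteq S$ and generates $I'$; so $I'$ is generated by $d$-forms in $S$. From $R/I\cong(S/I')[x_h,\dots,x_n]$ one gets $\hgt_S(I')=\hgt_R(I)=h=\dim S$, so $I'$ is primary to the maximal ideal $\mf':=(x_0,\dots,x_{h-1})$ of $S$.

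Now I would run a multiplicity argument on $I'$. Using that $k$ is infinite and $\sqrt{I'}=\mf'$, graded prime avoidance produces $d$-forms $g_1,\dots,g_h\in[I']_d$ generating an $\mf'$-primary ideal $J'$. These form a homogeneous system of parameters in the Cohen--Macaulay ring $S$, so the Koszul complex on $g_1,\dots,g_h$ resolves $S/J'$; reading off its Hilbert series $\bigl((1-t^d)/(1-t)\bigr)^h$ gives $\dim_k S/J'=d^h$, hence $e(J')=d^h$ since colength equals multiplicity for a parameter ideal in a Cohen--Macaulay ring. Since $J'\subseteq I'\subseteq\mf'^d$ and $e(\mf'^d)=d^h e(\mf')=d^h$, the chain $d^h=e(J')\ge e(I')\ge e(\mf'^d)=d^h$ forces $e(I')=e(\mf'^d)$, so the polynomial-ring form of \Cref{thm:Rees} noted after its statement gives $\overline{I'}=\overline{\mf'^d}=\mf'^d$. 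To finish, I transport this back to $R$: every degree-$d$ monomial $m$ in $x_0,\dots,x_{h-1}$ lies in $\mf'^d=\overline{I'}$, and the corresponding integral equation of $m$ over $I'$ inside $S$ is equally an integral equation over $I=I'R$ inside $R$, so $(x_0,\dots,x_{h-1})^d\subseteq\overline I$; conversely $I\subseteq\mf'^dR=(x_0,\dots,x_{h-1})^d$, which is integrally closed, so $\overline I\subseteq(x_0,\dots,x_{h-1})^d$. Hence $\overline I=(x_0,\dots,x_{h-1})^d$, which is $(ii)$.

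The step I expect to demand the most care is the multiplicity sandwich in $(i)\Rightarrow(ii)$: producing a system of parameters consisting of $d$-forms inside the $\mf'$-primary ideal $I'$ (graded prime avoidance over an infinite field), extracting $e(J')=d^h$ from the Koszul resolution, and invoking Rees's theorem in the graded/polynomial setting rather than the local one. By contrast, the reductions surrounding it — the behaviour of $\ess$ under extension of linear forms, the flat base change $S\subseteq R$, and the integral closedness of $(x_0,\dots,x_{h-1})^d$ — are routine.
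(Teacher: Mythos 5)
Your argument follows the paper's proof in all essentials: reduce to the extended ideal $I'\subseteq k[x_0,\dots,x_{h-1}]$, which is $\mf'$-primary and generated by $d$-forms; produce a complete intersection of $d$-forms inside it of multiplicity $d^h$; conclude $\overline{I'}=(\mf')^d$ from a multiplicity comparison via the polynomial-ring form of \Cref{thm:Rees}; and transport back to $R$ using that $(x_0,\dots,x_{h-1})^dR$ is integrally closed. Whether one applies Rees to the complete intersection $J'\subseteq(\mf')^d$ and sandwiches integral closures (as the paper does) or to $I'\subseteq(\mf')^d$ after sandwiching multiplicities (as you do) is immaterial, and your Koszul computation of $e(J')=d^h$ is the same count the paper makes. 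Your treatment of (ii)$\Rightarrow$(iii) and (iii)$\Rightarrow$(i) also matches; your explicit verification that $\ess(I)\geq\hgt(I)$ usefully fills in what the paper leaves implicit when it cites the argument of \Cref{lem:cone-condition}.

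The one discrepancy is your appeal to ``$k$ infinite'' in the prime-avoidance step of (i)$\Rightarrow$(ii). The lemma as stated carries no hypothesis on $k$, and over a finite field a vector space can be a finite union of proper subspaces, so the avoidance argument within the fixed degree $d$ is not available and one cannot directly select a parameter sequence of $d$-forms from $[I']_d$. The paper sidesteps this by first extending scalars to $S'=\overline{k}[x_0,\dots,x_{h-1}]$, choosing the $h$ general $d$-forms there, and then descending the conclusion by faithful flatness ($\overline{I'}=\overline{I'S'}\cap R'$). Your proof is complete as written whenever $k$ is infinite, which covers every application of the lemma in the paper; to obtain the statement in its stated generality, insert this base-change-and-descent step before the multiplicity argument.
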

\begin{proof}\mbox{}
\begin{itemize}
    \item[(i)$\implies$(ii):] Let $R' = k[x_0,\dots, x_{h-1}]$. Without loss of generality, there exists an ideal $I'$ of $R'$ such that $I = I'R$. By flatness of $R'\hookrightarrow R$, we have $\hgt(I') = h$. Let $\mf'$ denote the homogeneous maximal ideal of $R'$. Let $S' = \overline{k}[x_0,\dots, x_{h-1}]$. If $J'\subseteq I'S'$ is generated by $h$ general $d$-forms in $I'S'$, then $J'$ is a $(d,\dots, d)$-complete intersection of height $h$. We have $e(J') = d^h = e((\mf')^d)$, hence $\overline{J'} = \overline{(\mf'S')^d}$ by \Cref{thm:Rees}. As $I\subseteq (\mf'S')^d\subseteq \overline{I}$ and $(\mf'S')^d$ is integrally closed, we have $\overline{I'} = (\mf'S')^d$. By faithful flatness of $R'\to S'$, we have $\overline{I'} = \overline{I'S'}\cap R' = (\mf')^d$. Passing back to the ring $R$, the ideal $(\mf')^dR$ is integrally closed and $I\subseteq (\mf')^dR = \overline{I'}R \subseteq \overline{I}$, hence $\overline{I} = (\mf')^dR$.
    \item[(ii)$\implies$(iii):] This follows from the containment $I\subseteq \overline{I}$.
    \item[(iii)$\implies$(i):] This follows from the argument of \Cref{lem:cone-condition}. \details{To be precise, we note that every $d$-form in $(x_0,\dots, x_{h-1})^d$ cannot involve $x_h,\dots, x_n$, and so the generators of $I$ are contained in $k[x_0,\dots, x_{h-1}]$.}
\end{itemize}
\end{proof}
We state the main technical lemma of this section.
\begin{lemma}\label{lem:min-fpt-char-CI-height-n}
    Let $k$ be an algebraically closed field of characteristic $p>0$ and let $R = k[x_0,\dots, x_n]$. Let $I = (f_1,\dots, f_n)\subseteq R$ denote a complete intersection ideal generated by $d$-forms. Then $\fpt(I) = n/d$ if and only if $\ess(I) = n$.
\end{lemma}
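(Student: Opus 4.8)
The plan is to prove the two implications separately, the easy one first. If $\ess(I)=n$, then by \Cref{lem:ess-dim-equals-height} we may change coordinates so that $\overline I=(x_0,\dots,x_{n-1})^d$; then $\fpt(I)=\fpt(\overline I)$ by \Cref{prop:fpt-properties}(iii), and $\fpt\bigl((x_0,\dots,x_{n-1})^d\bigr)=\tfrac1d\fpt\bigl((x_0,\dots,x_{n-1})\bigr)=\tfrac nd$ by \Cref{prop:fpt-properties}(ii) together with the Newton-polytope formula \Cref{prop:monomial-fpt}. For the converse, assume $\fpt(I)=n/d$. Since $I$ is a complete intersection, $R/I$ is Cohen--Macaulay of dimension one, hence has no embedded primes; fix a minimal prime $\pf$ of $I$ (the homogeneous ideal of a point of the nonempty scheme $V(I)\subseteq\P^n$) and change coordinates so that $\pf=(x_1,\dots,x_n)$. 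By \Cref{lem:cone-condition} it suffices to show $I\subseteq\pf^d=(x_1,\dots,x_n)^d$.

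Fix a monomial order $>$ which, within each total degree, ranks monomials by decreasing $x_0$-exponent (breaking remaining ties however one likes); the point of this choice is that the leading term $\ini_>(f)$ of a homogeneous $f$ realizes the $\pf$-adic order of $f$, so $\ini_>(f)\in(x_1,\dots,x_n)^d$ if and only if $f\in(x_1,\dots,x_n)^d$. Pass to the limiting Newton polytope $\Gamma:=\Gamma_>(I)$ of the graded family $\{\ini_>(I^m)\}_m$. Combining \Cref{prop:semicont}, \Cref{prop:fpt-properties} and \Cref{prop:monomial-fpt} along the family gives $\fpt(I)=1/\mu$ with $\mu=\inf\{s:s\vec 1\in\Gamma\}$; since $\mu\le d/n$ always (this is \Cref{prop:tw-fpt-equigen}), the hypothesis forces $\mu=d/n$, so the ray $\R_{\ge0}\vec 1$ first meets $\Gamma$ at the boundary point $q:=(d/n)\vec 1$. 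One computes $|q|=\tfrac{(n+1)d}{n}=:\tau$ and notes that $q$ is exactly the centroid of the dilated simplex $\tau\Delta_n$.

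Now sandwich two volume estimates for the missing slice $C_\tau:=\tau\Delta_n\setminus\Gamma$, normalizing so that $\vol(\Delta_n)=1/n!$. On one hand, because the relevant regularities are $dm+O(1)$ with $d<\tau$, the degree-$\tau m$ value of the Hilbert function of the one-dimensional module $R/\overline{I^m}$ is eventually its Hilbert polynomial $d^n\binom{m+n-1}{n}$, and feeding in the Hilbert-series estimates for powers of a complete intersection (\Cref{lem:hilb-fn-power-of-CI}) gives $\vol(C_\tau)=d^n/n!$. On the other hand, choose a supporting hyperplane $H$ to $\Gamma$ at $q$ (\Cref{lem:supporting-hyperplane}); then $\Gamma$ lies in one closed half-space $H^+$, so $C_\tau\supseteq\tau\Delta_n\cap H^-$, and since $H$ passes through the centroid $q$ of $\tau\Delta_n$, Grünbaum's inequality in the form \Cref{lem:vol-upper-bound}/\Cref{lem:grunbaum-eq-for-simplices} yields $\vol(\tau\Delta_n\cap H^-)\ge(1-\Mmc_n)\vol(\tau\Delta_n)=d^n/n!$, the last equality being the elementary identity $(1-\Mmc_n)\bigl(\tfrac{(n+1)d}{n}\bigr)^n=d^n$. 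Comparing, every inequality is an equality: $\overline{C_\tau}=\tau\Delta_n\cap\overline{H^-}$, and, by the equality case, $H$ is parallel to a facet of $\tau\Delta_n$. A short computation then shows $\overline{C_\tau}=\tau\Delta_n\cap\{a_j\ge d/n\}$ for some $j\in\{0,\dots,n\}$ and that the ambient supporting hyperplane $H$ has inner normal $\vec 1+\alpha e_j$ with $\alpha\in[-1,0)$.

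It remains to pin down $j$ and $\alpha$ and translate back. Since $\pf\supseteq I^m$ is a monomial ideal, no $g\in I^m$ can have a pure power of $x_0$ as its leading term, so $\ini_>(I^m)$ contains no $x_0^k$ and $\Gamma$ misses the $x_0$-axis; were $j\ge 1$, the vertex $\tau e_0$ would lie in $\Gamma\cap\{a_j\le d/n\}$ on that axis, which is impossible, so $j=0$. Testing the containment $\Gamma\subseteq\{\langle a,\vec 1+\alpha e_0\rangle\ge\langle q,\vec 1+\alpha e_0\rangle\}$ against any monomial $x^b\in[\ini_>(I)]_d$ (a space of dimension $n\ge 1$) forces $\alpha=-1$, whence $\Gamma\subseteq\{a_1+\cdots+a_n\ge d\}=\Gamma\bigl((x_1,\dots,x_n)^d\bigr)$. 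Because $\Gamma\supseteq\Gamma(\ini_>(I))$ and $(x_1,\dots,x_n)^d$ is integrally closed, $\ini_>(I)\subseteq(x_1,\dots,x_n)^d$, and by the defining property of $>$ this upgrades to $I\subseteq(x_1,\dots,x_n)^d=\pf^d$; then \Cref{lem:cone-condition} gives $\ess(I)\le n$, while $\ess(I)\ge\hgt(I)=n$, finishing the proof. The main obstacle is the middle step: making the limiting-polytope formalism precise enough that $\fpt(I)$ is genuinely the first diagonal hit on $\Gamma$ and that the slices of $\Gamma$ carry the lattice-point volumes prescribed by the Hilbert functions of the powers (tracking $\overline{I^m}$ rather than $I^m$, since the polytope sees integral closures, and controlling the regularity), so that the two volume estimates can be sandwiched to trigger the equality case of Grünbaum's inequality.
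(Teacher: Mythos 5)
Your overall architecture is the same as the paper's: pass to a monomial order adapted to a minimal prime $\pf=(x_1,\dots,x_n)$, form the limiting polytope of initial ideals of powers, sandwich the volume of a fixed-degree slice between the Hilbert-function count for powers of a complete intersection (\Cref{lem:hilb-fn-power-of-CI}, together with a Davenport-type lattice-point comparison as in \Cref{lem:vol-lower-bound}) and Gr\"unbaum's bound, and use the equality case to force the missing region to sit over the facet $\{a_0=0\}$, whence $\ini_>(I)\subseteq\pf^d$ and $I\subseteq\pf^d$. Two steps need repair. First, the identity $\fpt(I)=1/\mu$ that you invoke (and yourself flag as the main obstacle) does not follow from \Cref{prop:semicont}, \Cref{prop:fpt-properties} and \Cref{prop:monomial-fpt}: those yield only $\fpt(I)/m=\fpt(I^m)\geq\fpt(\ini_>(I^m))$, hence $\fpt(I)\geq 1/\mu$; the reverse inequality is exactly the sort of statement that fails for initial degenerations and is not established anywhere (in particular, ``$\mu\leq d/n$ always'' is not a consequence of \Cref{prop:tw-fpt-equigen}). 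Fortunately you never need it: the hypothesis $\fpt(I)=n/d$ plus the one-sided inequality already gives $\mu\geq d/n$, i.e.\ $q=(d/n)\vec 1\notin\operatorname{int}\Gamma$, and that is all the Gr\"unbaum step requires --- apply \Cref{lem:vol-upper-bound}, which is stated for ``centroid not in the interior'' and covers the case $q\notin\Gamma$, rather than a supporting hyperplane at $q$ via \Cref{lem:supporting-hyperplane}. This is precisely how the paper's proof is organized: it never claims the limiting polytope computes $\fpt(I)$; it uses only ``centroid in the (relative) interior $\Rightarrow\fpt(I)>n/d$'' and gets the complementary constraint from the Hilbert-function volume count. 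Relatedly, the Hilbert-function side gives only $\vol(C_\tau)\leq d^n/n!$ (lattice points outside the ideal may still lie in the convex hull); the equality appears only after comparison with Gr\"unbaum. And you should count monomials of $I^m$, not $\overline{I^m}$: it is $H_R(R/\ini_>(I^m),t)=H_R(R/I^m,t)$ that lets you feed in \Cref{lem:hilb-fn-power-of-CI}.

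Second, your exclusion of the facets $\{a_j=0\}$ with $j\geq 1$ is too weak as written: knowing only $I^m\subseteq\pf$, so that $\ini_>(I^m)$ contains no pure power of $x_0$, gives $\Gamma(\ini_>(I^m))\subseteq\{a_1+\dots+a_n\geq 1\}$, and after the $1/m$ rescaling and closure this degenerates to $\{a_1+\dots+a_n\geq 0\}$, which does not exclude $\tau e_0$ from $\Gamma$. You need $I^m\subseteq\pf^m$ (every monomial of every element of $I^m$ has $x_1,\dots,x_n$-degree at least $m$), which survives the rescaling and gives $\Gamma\subseteq\{a_1+\dots+a_n\geq 1\}$; this is the analogue of the paper's containment $\Pc\subseteq D_{d(n+1),\,d(n+1)-n}$. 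With these two repairs (and with the limiting-polytope bookkeeping done at the level of the slice polytopes, as the paper does with the ascending chain of $\gamma(\af_{2^m},2^md(n+1))$, which sidesteps the closure-versus-slice issue you worry about), your argument is essentially the paper's proof, up to the harmless renormalization of working with $\{\ini_>(I^m)\}$ and the slice at $\tau=(n+1)d/n$ instead of $\{\ini_>(I^{nm})\}$ and $d(n+1)$; the easy direction is as you state it.
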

We begin our proof of \Cref{lem:min-fpt-char-CI-height-n} by computing the Hilbert series of $R/I^s$.
\begin{lemma}\label{lem:hilb-fn-power-of-CI}
    Let $I,R$ be as in \Cref{lem:min-fpt-char-CI-height-n}. For a positive integer $s$ and for $t \geq (d-1)n + d(s-1)$, we have $H_R(R/I^s, t) = \binom{n+s-1}{n}d^n$. 
\end{lemma}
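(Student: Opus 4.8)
The plan is to compute $\dim_k[R/I^s]_t$ by filtering $R/I^s$ by powers of $I$ and exploiting the fact that, since $I$ is generated by a regular sequence of $d$-forms, the associated graded ring $\mathrm{gr}_I(R)$ is a polynomial ring over $R/I$.

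First I would treat the case $s=1$. Since $f_1,\dots,f_n$ is a regular sequence of $d$-forms in the $(n+1)$-variable polynomial ring $R$, the Koszul complex resolves $R/I$, so
\[
\sum_{t\geq 0}\dim_k[R/I]_t\,T^t \;=\; \frac{(1-T^d)^n}{(1-T)^{n+1}} \;=\; \frac{g(T)}{1-T},\qquad g(T):=(1+T+\cdots+T^{d-1})^n.
\]
As $g$ is a polynomial of degree $(d-1)n$ with $g(1)=d^n$, one gets $\dim_k[R/I]_t=\sum_{j\leq t}g_j=d^n$ for every $t\geq (d-1)n$. This is the asserted value when $s=1$, since then $\binom{n+s-1}{n}d^n=d^n$ and the bound $(d-1)n+d(s-1)$ becomes $(d-1)n$.

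Next I would pass to general $s$. Because $f_1,\dots,f_n$ is a regular sequence, the canonical surjection $(R/I)[Y_1,\dots,Y_n]\twoheadrightarrow\mathrm{gr}_I(R)=\bigoplus_{j\geq 0}I^j/I^{j+1}$ sending $Y_i$ to the class of $f_i$ is an isomorphism, and giving each $Y_i$ internal degree $d$ upgrades it to an isomorphism of bigraded rings. Reading off the internal degree-$t$ piece of the Rees-degree-$j$ component and counting the $\binom{j+n-1}{n-1}$ monomials of degree $j$ in the $Y_i$ gives $\dim_k[I^j/I^{j+1}]_t=\binom{j+n-1}{n-1}\dim_k[R/I]_{t-dj}$. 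Then, for $t\geq (d-1)n+d(s-1)$ and each $0\leq j\leq s-1$ we have $t-dj\geq (d-1)n$, so this quantity equals $\binom{j+n-1}{n-1}d^n$ by the first step; summing the short exact sequences $0\to I^j/I^{j+1}\to R/I^{j+1}\to R/I^j\to 0$ for $j=0,\dots,s-1$ and invoking the hockey-stick identity,
\[
\dim_k[R/I^s]_t \;=\; \sum_{j=0}^{s-1}\dim_k[I^j/I^{j+1}]_t \;=\; d^n\sum_{j=0}^{s-1}\binom{j+n-1}{n-1} \;=\; d^n\binom{n+s-1}{n}.
\]
The ``in particular'' clause is then immediate from $d(s+n)=(d-1)n+d(s-1)+(n+d)\geq (d-1)n+d(s-1)$.

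This is essentially a bookkeeping exercise with graded Hilbert functions, and I do not expect a serious obstacle. The one place to be careful is the internal grading on $\mathrm{gr}_I(R)$: each $Y_i$ must carry weight $d$, which is exactly what produces the shift $[R/I]_{t-dj}$ and hence the precise stabilization range $t\geq (d-1)n+d(s-1)$ rather than a cruder bound.
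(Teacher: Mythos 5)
Your proof is correct and takes essentially the same route as the paper: both decompose $H_R(R/I^s,t)$ into $\binom{j+n-1}{n-1}$ copies of $H_R(R/I,\cdot)$ shifted by $dj$ for $0\leq j\leq s-1$, and then use the complete-intersection Hilbert series $(1-T^d)^n/(1-T)^{n+1}$ to see each summand equals $d^n$ in the stated range. The only difference is that you derive the decomposition yourself from $\mathrm{gr}_I(R)\cong (R/I)[Y_1,\dots,Y_n]$ (valid since the $f_i$ form a regular sequence), whereas the paper simply cites this formula from Guardo--Van Tuyl.
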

\begin{proof}
    We define 
    \[\Lc_{n,s}:= \{(a_1,\dots, a_n): a_i\geq 0, a_1 + \dots + a_n \leq s-1\}.\]
    By \cite{guardo_powers_2005}, Corollary 2.3, we have 
    \begin{equation}\label{eqn:hilb-fn-formula}
    H_R(R/I^s, t) = \sum_{(a_1,\dots, a_r)\in \Lc_{n,s}}H_R(R/I, t - d(a_1+\dots +a_n)).\end{equation}
    The Hilbert series of $R/I$ is given by 
    \begin{equation}\label{eqn:hilb-series-of-CI}
    \sum_{i\geq 0}H_R(R/I,i)t^i =\frac{(1-t^d)^n}{(1-t)^{n+1}} = (1+t+\dots + t^{d-1})^n(1+t+t^2+t^3 + \dots),
    \end{equation}
    hence $H_R(R/I, t) = d^n$ for $t\geq (d-1)n$. By \Cref{eqn:hilb-fn-formula,eqn:hilb-series-of-CI}, for $t \geq (d-1)n + d(s-1)$ we have 
    \[H_R(R/I^s, t) = |\Lc_{n,s}|d^n = \binom{n + s - 1}{n}d^n.\]
\end{proof}
In particular, \Cref{lem:hilb-fn-power-of-CI} holds for $t\geq d(s+n)$.
\begin{lemma}\label{lem:fpt-relint-bound}
    Let $\af\subseteq R$ be a monomial ideal containing a monomial $m$ of degree $t$. For any $t'>t$, if $\frac{t'}{n+1}\vec 1\in \gamma(\af, t')$, then $\fpt(\af) > \frac{n+1}{t'}$.
\end{lemma}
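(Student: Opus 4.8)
The plan is to produce, via \Cref{prop:monomial-fpt}, a point on the diagonal lying in $\Gamma(\af)$ with coordinate-sum strictly below $t'$. Recall that $\fpt(\af) = 1/\mu$, where $\mu = \inf\{s : s\vec 1 \in \Gamma(\af)\}$; hence it suffices to exhibit some $s\vec 1 \in \Gamma(\af)$ with $s < t'/(n+1)$, since then $\fpt(\af) = 1/\mu \geq 1/s > (n+1)/t'$.

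The geometric idea is to interpolate between a small perturbation of the centroid $v := \tfrac{t'}{n+1}\vec 1$ of $t'\Delta_n$ and the point $\log(m)$. Note that $v$ lies in $\gamma(\af, t')$ by hypothesis, hence in $\Gamma(\af, t')\subseteq\Gamma(\af)$, while $\log(m)\in\Gamma(\af)$ since $m\in\af$, and $\log(m)$ has coordinate-sum $t < t'$. A generic segment between these two points will not meet the diagonal, but the relative-interior hypothesis gives room to first nudge $v$ within the hyperplane $\Pi_{t'}=\{a_0+\dots+a_n=t'\}$ and then interpolate so as to land back on the diagonal below level $t'$. Concretely, for $\theta\in(0,1)$ I would set
\[
s_\theta = \frac{(1-\theta)t' + \theta t}{n+1}, \qquad q_\theta = \frac{1}{1-\theta}\bigl(s_\theta \vec 1 - \theta\log(m)\bigr),
\]
so that by construction $(1-\theta)q_\theta + \theta\log(m) = s_\theta\vec 1$, one checks $\sum_i (q_\theta)_i = t'$ (so $q_\theta\in\Pi_{t'}$), and $q_\theta\to v$ as $\theta\to 0^+$.

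The key steps are then: (1) because $v\in\gamma(\af,t')$, the relative interior of $\Gamma(\af,t')$ inside $t'\Delta_n$ (equivalently, within the affine hyperplane $\Pi_{t'}$), and $q_\theta$ approaches $v$ while remaining in $\Pi_{t'}$, for all sufficiently small $\theta>0$ we have $q_\theta\in\Gamma(\af,t')\subseteq\Gamma(\af)$; (2) $\log(m)\in\Gamma(\af)$; (3) by convexity of $\Gamma(\af)$, the point $s_\theta\vec 1 = (1-\theta)q_\theta + \theta\log(m)$ lies in $\Gamma(\af)$; and (4) since $\theta>0$ and $t<t'$, we get $s_\theta < t'/(n+1)$, whence $\mu\leq s_\theta < t'/(n+1)$, and \Cref{prop:monomial-fpt} yields $\fpt(\af)\geq 1/s_\theta > (n+1)/t'$.

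I expect the only nontrivial point to be step (1): one must genuinely use that the hypothesis concerns the \emph{relative interior} of $\Gamma(\af,t')$ --- an open condition within the hyperplane $\Pi_{t'}$ --- rather than mere membership, so that the perturbed points $q_\theta$ stay inside $\Gamma(\af,t')$. The remaining steps are elementary, and the precise form of $q_\theta$ is essentially forced by the requirement that the segment joining it to $\log(m)$ meet the diagonal.
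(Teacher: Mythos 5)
Your proof is correct and is essentially the paper's argument: the paper forms $\lambda\log(m)+(1-\lambda)\gamma(\af,t')\subseteq\Gamma(\af)$ for $0<\lambda\ll 1$ and notes the diagonal point $\frac{\lambda t+(1-\lambda)t'}{n+1}\vec 1$ lands inside, and your $q_\theta$ is exactly the witness point in $\gamma(\af,t')$ that this implicitly uses, with the openness-in-the-hyperplane step made explicit. No substantive difference.
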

\begin{proof}
    Set $y = \log(m)$. By convexity of $\Gamma(\af)$, we have $\l y + (1-\l)\gamma(\af, t')\subseteq \Gamma(\af)$ for all $\l \in [0,1]$. Taking $0 < \l \ll 1$, we obtain $\frac{\l t + (1-\l)t'}{n+1}\vec{1}\in \Gamma(\af)$, which implies $\fpt(\af)\geq \frac{n+1}{\l t + (1-\l)t'} > \frac{n+1}{t'}$ by \Cref{prop:monomial-fpt}.
\end{proof}
\details{Choose $\varepsilon>0$ such that for all $x\in t'\Delta_n$ with $|\frac{t'}{n+1}\vec{1} - x| < \varepsilon$, we have $x\in \gamma(\af, t')$. Write $y = (y_0,\dots, y_n)$. Choose $\lambda < \frac{\varepsilon}{2t(n+1)}$. For $0\leq i\leq n$, set 
\[
x_i = \frac{t'}{n+1}+\left(\frac{\l}{\l-1}\right)\left(y_i-\frac{t}{n+1}\right).
\]
Set $x = (x_0,\dots, x_n)$. As $0\leq y_i\leq t$ for all $i$ and $\lambda \leq \frac{1}{2}$, by the triangle inequality we have \[\text{dist}\left(\frac{t'}{n+1}\vec{1}, x\right) \leq \sum_{i=0}^n \left(\frac{\l}{\l-1}\right)\left(y_i-\frac{t}{n+1}\right) \leq 2\l(n+1)t < \varepsilon,\]
so $x\in \gamma(\af, t')\subseteq \Gamma(\af)$. As $\l y + (1-\l)x = \frac{\l t+(1-\l)t'}{n+1}\vec{1}\in \Gamma(\af)$.
}
Lastly, we need a result relating volume and integer point counts for convex bodies.
\begin{lemma}\label{lem:vol-lower-bound}
    Let $\Delta_n, T_n, \pi_n$ be as in \Cref{defn:simplex,defn:pi_n-and-T_n}. For $t\in \Z^+$ and $P\subseteq t\Delta_n$ a convex set, we have 
    \begin{equation}\label{eqn:lattice-pt-vol}
    |\vol_n(\pi_n(P)) -  \#(P\cap \Z^{n+1})| \leq \sum_{i=1}^{n-1}{n\choose i}\frac{t^i}{i!}.\end{equation}
\end{lemma}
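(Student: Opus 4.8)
The plan is to reduce the statement to a volume-versus-lattice-point comparison for a convex body inside the dilated coordinate simplex $tT_n\subseteq\R^n$, and then to prove that comparison by induction on $n$, slicing at integer heights. For the reduction: $t\Delta_n$ lies in the hyperplane $\{a_0+\dots+a_n=t\}$, on which $\pi_n$ (of \Cref{defn:pi_n-and-T_n}) is injective, and since $t\in\Z$ the missing coordinate $a_n=t-\sum_{i<n}a_i$ of a lattice point is again an integer; thus $\pi_n$ carries $t\Delta_n\cap\Z^{n+1}$ bijectively onto $tT_n\cap\Z^n$, and in particular $\#(P\cap\Z^{n+1})=\#(Q\cap\Z^n)$ where $Q:=\pi_n(P)\subseteq tT_n$ is convex. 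So it suffices to show $\bigl|\vol_n(Q)-\#(Q\cap\Z^n)\bigr|\le\sum_{i=1}^{n-1}t^i/i!$ for every convex $Q\subseteq tT_n$.

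For that comparison I would induct on $n$, slicing $Q$ at the integer heights $x_n=j$ for $j=0,\dots,t$ (the only heights carrying lattice points of $Q$, since $Q\subseteq tT_n$). Writing $Q_j:=Q\cap\{x_n=j\}$ and $V(s):=\vol_{n-1}(Q\cap\{x_n=s\})$, one has $\#(Q\cap\Z^n)=\sum_{j=0}^t\#(Q_j\cap\Z^{n-1})$ and $\vol_n(Q)=\int_0^t V(s)\,ds$, so by the triangle inequality the discrepancy is at most $\bigl|\int_0^t V-\sum_{j=0}^t V(j)\bigr|+\sum_{j=0}^t\bigl|\vol_{n-1}(Q_j)-\#(Q_j\cap\Z^{n-1})\bigr|$. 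Convexity of $Q$ makes $V$ unimodal on $[0,t]$, so the first (Riemann-sum) error is controlled by $\max_s V(s)\le\vol_{n-1}(tT_{n-1})=t^{n-1}/(n-1)!$; and each $Q_j$ is a convex subset of $(t-j)T_{n-1}$, so the inductive hypothesis bounds the second term by $\sum_{j=0}^t\sum_{i=1}^{n-2}(t-j)^i/i!$, which upon summing in $j$ is a polynomial in $t$ of degree $n-1$. The base case $n=1$ (with $Q$ a subinterval of $[0,t]$) is immediate. An equivalent route is the standard ``cube sandwich'': with $A=\{v\in\Z^n:v+[0,1)^n\subseteq Q\}$ and $B=\{v\in\Z^n:(v+[0,1)^n)\cap Q\neq\emptyset\}$ one gets $\#A\le\vol_n(Q),\,\#(Q\cap\Z^n)\le\#B$, hence $\bigl|\vol_n(Q)-\#(Q\cap\Z^n)\bigr|\le\#(B\setminus A)$, the number of lattice cubes straddling $\partial Q$, which is then estimated by projecting onto the coordinate hyperplanes.

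From either viewpoint the order of magnitude $O(t^{n-1})$ is transparent, and I expect the crux to be sharpening the constants so as to land exactly on $\sum_{i=1}^{n-1}t^i/i!$ rather than on an unspecified multiple of $t^{n-1}$. Concretely, this requires handling the passage between the sum $\sum_j V(j)$ and the integral $\int_0^t V$ (and, in the cube picture, the overcounting of boundary cubes shared among the $n$ coordinate projections and along the lower-dimensional faces of $tT_n$) precisely enough that every factorial denominator is preserved through the induction.
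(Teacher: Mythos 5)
Your reduction along $\pi_n$ --- using $t\in\Z^+$ to identify $P\cap\Z^{n+1}$ with the lattice points of the convex set $Q=\pi_n(P)\subseteq tT_n$ --- is exactly the paper's first step. Where you diverge is in the volume-versus-lattice-point comparison itself: the paper does not argue it from scratch, but quotes Davenport's 1951 lemma (the discrepancy of a convex body in $\R^n$ is bounded by the sum of the volumes of its coordinate projections) and only adds the observation that each $i$-dimensional coordinate projection of $Q$ lies in a copy of $t\,T_i$, hence has volume at most $t^i/i!$. Your slicing induction (or the cube sandwich) is a genuinely different, self-contained substitute: unimodality of the slice volume $V(s)$ follows from Brunn--Minkowski, the Riemann-sum error is $O(\max_s V(s))=O(t^{n-1}/(n-1)!)$ (note it is really up to twice the maximum, one monotone piece on each side of the peak), and the inductive contribution sums in $j$ to a degree-$(n-1)$ polynomial in $t$. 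So your sketch does deliver a bound of the form $O_n(t^{n-1})$, which is all the paper ever uses: in the proof of \Cref{lem:min-fpt-char-CI-height-n} the right-hand side of \Cref{eqn:lattice-pt-vol} is divided by $2^{mn}$ and sent to $0$.

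The one point to flag is the ``crux'' you postpone: landing exactly on $\sum_{i=1}^{n-1}t^i/i!$ is not something you (or anyone) can do, because the stated constant is too small. Already for $n=2$ and $P=t\Delta_2$ the discrepancy is $\binom{t+2}{2}-t^2/2=(3t+2)/2>t$ for every $t\geq 1$ (and for $n=1$ the stated bound is the empty sum $0$, while an interval can have discrepancy $1$). Davenport's lemma, as invoked in the paper, actually gives $\sum_{i=0}^{n-1}\binom{n}{i}\,t^i/i!$ --- there are $\binom{n}{i}$ coordinate projections in each dimension $i$, plus the constant term --- and the statement of \Cref{lem:vol-lower-bound} silently drops these factors; your induction likewise inevitably produces extra constants (the top-degree term appears once from the Riemann-sum error and once from the top inductive term). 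So do not try to sharpen to the literal inequality; prove the bound with an explicit $n$-dependent constant (or with the binomial coefficients), which is harmless for every subsequent use of the lemma.
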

\begin{proof} 
Since $t\Delta_n$ is contained in the affine space $x_0 + \dots + x_n = t$ and $t\in \Z^+$, $\pi_n$ induces a bijection between $t\Delta_n\cap \Z^{n+1}$ and $T_n\cap \Z^n$, so \Cref{eqn:lattice-pt-vol} can be interpreted as a statement relating the volume and integer point count of $\pi_n(P)$. For each $P'$ occurring as an $i$-dimensional projection of $\pi_n(P)$ onto an $i$-dimensional coordinate axis, $\frac{1}{t}P'$ is contained in an $i$-dimensional simplex, so we have $\vol_i(P')\leq \frac{t^i}{i!}$. The result then follows from \cite{davenport_principle_1951}.\end{proof}
\details{In our application, $P$ is convex, so $h = 1$.
}
We now prove \Cref{lem:min-fpt-char-CI-height-n}.
\begin{proof}
    Let $\pf$ be a minimal prime over $I$. Since $k = \bar{k}$ and $I$ is homogeneous, we may change coordinates so that $\pf = (x_1,\dots, x_n)$. Let $>$ denote the lexicographic order and define the graded system of ideals $\af_\bullet = \{\ini_>(I^{nm})\}_m$. Since $\pf^r$ is a monomial ideal for all $r\geq 0$ and $I^r\subseteq \pf^r$, we have $\af_m\subseteq \pf^{nm}$ for all $m\geq 0$. Since $\af_\bullet$ is graded, we have for any $t\in \Z^+$
    \[
    [\af_{2^m}]_{2^mt}[\af_{2^m}]_{2^mt} \subseteq [\af_{2^m} \af_{2^m}]_{2^{m+1}t} \subseteq  [\af_{2^{m+1}}]_{2^{m+1}t}.
    \]
    It follows that $\{\frac{1}{2^m}\gamma(\af_{2^m}, 2^mt)\}_m$ is an ascending chain of convex subsets of $H_t$. We then set $t = d(n+1)$ and let $\Pc$ denote the ascending union $\bigcup_{m\geq 1}\gamma(\af_{2^m}, 2^md(n+1))$. If $d\vec 1\in \Pc$, there exists some $m$ such that $d\vec 1\in \gamma(\af_{2^m}, 2^md(n+1))$. By \Cref{lem:fpt-relint-bound}, we have $\fpt(\af_{2^m}) > \frac{n+1}{2^md(n+1)} = \frac{1}{2^md}$, so $\fpt(I) > n/d$ by \Cref{prop:fpt-properties}.

    Conversely, suppose $d\vec 1\notin \Pc$. Then for all $m$, we have $d\vec 1\notin \frac{1}{2^m}\gamma(\af_{2^m}, 2^md(n+1))$. By \Cref{lem:vol-upper-bound}, we have 
    \begin{equation}
        \vol(\Pc) = \lim_{m\to\infty}\vol\left(\frac{1}{2^m}\gamma(\af_{2^m}, 2^md(n+1))\right) \leq (d(n+1))^n \frac{\Mmc_n}{n!}.
    \end{equation}
    We now derive a lower bound for $\vol(\Pc)$. First, by \Cref{lem:hilb-fn-power-of-CI}, we have
    \begin{align*}
        \#\Z^{n+1}\cap (\gamma(\af_{2^m}, 2^md(n+1))) & \geq H_R(\af_{2^m}, 2^md(n+1)) 
        \\&= H_R(I^{2^mn}, 2^md(n+1)) 
        \\ &= \binom{n + 2^md(n+1)}{n} - \binom{n + 2^mn - 1}{n}d^n
    \end{align*}
    provided $2^md(n+1) \geq d(2^mn + n)$, which is satisfied for all $m\geq \log_2 n$. Using the approximation $\binom{a + b}{b} = \frac{a^b}{b!} + O_b(a^{b-1})$, we have
    \begin{equation}\label{eqn:lattice-pt-est}
    \#\Z^{n+1}\cap (\gamma(\af_{2^m}, 2^md(n+1))) \geq \frac{(2^md)^n}{n!}\left((n+1)^n - n^n\right) + O_n(2^{m(n-1)})
    \end{equation}
    Combining the bounds \Cref{lem:vol-lower-bound,eqn:lattice-pt-est}, we have
    \begin{align*}
        \vol(\Pc) &= \lim_{m\to\infty}\vol\left(\frac{1}{2^m}\gamma(\af_{2^m}, 2^md(n+1))\right) = \lim_{m\to\infty}\frac{1}{2^{mn}}\vol(\gamma(\af_{2^m}, 2^md(n+1)))\\
        &\geq \lim_{m\to\infty} \frac{1}{2^{mn}}\left(\#(\Z^{n+1}\cap \gamma(\af_{2^m},2^md(n+1)) - \sum_{i=1}^{n-1}{n\choose i}\frac{(2^md(n+1))^i}{i!} \right)
        \\& \geq \lim_{m\to\infty} \frac{1}{2^{mn}}\left(\frac{(2^md)^n}{n!}\left((n+1)^n - n^n\right) + O_n(2^{m(n-1)})\right)\\&= (d(n+1))^n\frac{\Mmc_n}{n!}.
    \end{align*}
    It follows that $\vol(\Pc) = \vol(\overline{\Pc})= (d(n+1))^n\frac{\Mmc_n}{n!}$, so by \Cref{lem:grunbaum-eq-for-simplices}, we have $\overline{\Pc}=H^+\cap (d(n+1))\Delta_n$. Moreover, the boundary hyperplane $H$ of $H^+$ is parallel to a facet $F$ of $(d(n+1))\Delta_n$ with $F\subseteq H^+$ and $d(n+1)\eta_n\in H$. 

    For $\a\in \R$, define $D_{t,\b} = \{(a_0,\dots, a_n)\in t\Delta_n: a_0 \leq \b\}$. Since $\af_m\subseteq \pf^{mn}$, for any monomial $x_0^{a_0}\dots x_n^{a_n}\in (\af_m)_t$, we have $a_1+\dots+a_n \geq mn$ and hence $a_0\leq t - mn$. In particular, for all $m\geq 0$ we have
    \[
    \gamma(\af_{2^m}, 2^md(n+1))\subseteq D_{2^md(n+1), 2^md(n+1) - 2^mn}.
    \]
    As a consequence, we conclude $\Pc\subseteq D_{d(n+1), d(n+1) - n}$. As $F\subseteq \Pc$, the only possible choice for $F$ is the facet $\{a_0 = 0\}\subseteq d(n+1)\Delta_n$. We conclude that $\overline{\Pc} = D_{d(n+1), d}$. We then have \[\Gamma(\af_1, d(n+1))\subseteq \overline{\Pc} = D_{d(n+1),d} = \Gamma(\pf^{nd}, d(n+1)),\] so $[\af_1]_{d(n+1)}\subseteq [\pf^{nd}]_{n(d+1)}.$ For each generator $f_i$ of $I$, we have $x_0^d\ini_>(f_i^n)\in [\af_1]_{d(n+1)}\subseteq [\pf^{nd}]_{n(d+1)}$, so $x_0\nmid \ini_>(f_i^n)$ for all $i$. As $\ini_>(f_i^n) = \ini_>(f_i)^n$, we deduce that $I\subseteq \pf^d$. By \Cref{lem:ess-dim-equals-height}, we have $\ess(I) = n$.
\end{proof}
We are now able to prove \Cref{thm:equigen-fpt}. By \Cref{lem:ess-dim-equals-height}, it suffices to prove the following.
\begin{theorem*}[\ref{thm:equigen-fpt}]
    Let $k$ be an algebraically closed field of characteristic $p>0$. Let $I$ be a homogeneous ideal in $k[x_0,\dots, x_n]$ generated by $d$-forms and set $h = \hgt(I)$. Then $\fpt(I) = h/d$ if and only if $\ess(I) = h$.
\end{theorem*}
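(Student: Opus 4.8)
The ``if'' direction is a short computation. If $\ess(I)=h$ then, by the equivalence (i)$\Leftrightarrow$(ii) of \Cref{lem:ess-dim-equals-height}, we may choose coordinates with $\overline I = (x_0,\dots,x_{h-1})^d$. By \Cref{prop:fpt-properties}(iii) we have $\fpt(I)=\fpt(\overline I)$, and $\overline I$ is a monomial ideal with Newton polytope $\Gamma(\overline I)=\{a\in\R_{\ge 0}^{n+1}: a_0+\dots+a_{h-1}\ge d\}$, so $\inf\{t: t\vec 1\in\Gamma(\overline I)\}=d/h$; \Cref{prop:monomial-fpt} then gives $\fpt(I)=h/d$. (This also covers the trivial fact that $\fpt(I)=h/d$ whenever $h=n+1$, since then $I\subseteq\mf^d$ forces $\ess(I)=n+1$ and, via \Cref{prop:tw-fpt-equigen} plus monotonicity, $\fpt(I)=(n+1)/d$.)

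For the converse, assume $\fpt(I)=h/d$; I would reduce first to the case where $h$ equals one less than the number of variables. If $h<n+1$, pick general linear forms $\ell_{h+1},\dots,\ell_n$, set $L_h=(\ell_{h+1},\dots,\ell_n)$, and put $J_h=(I+L_h)/L_h\subseteq R/L_h\cong k[x_0,\dots,x_h]$. For any hyperplane $H=V(\ell)$ through the origin one has $\mf^{[p^e]}\subseteq\mf^{[p^e]}+(\ell)$, whence $\nu_{J|_H}(p^e)\le\nu_J(p^e)$ for every $e$ and so $\fpt(J|_H)\le\fpt(J)$; iterating this through $\ell_{h+1},\dots,\ell_n$ gives $\fpt(J_h)\le\fpt(I)=h/d$, while $\hgt J_h=h$ forces $\fpt(J_h)\ge h/d$ by \Cref{prop:tw-fpt-equigen}. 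Thus $\fpt(J_h)=h/d$ for an ideal whose height is one less than its number of variables; granting the remaining case (applied with $n$ replaced by $h$) we get $\ess(J_h)=h$, and then \Cref{prop:ess-dim-general-linear-space} with $r=t=h$ (so that $\ess(J_h)$ is determined by $\ess(I)$ and $h+1$) forces $\ess(I)=h$.

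It remains to prove the converse when $h=n$. Pick $n$ general $d$-forms $g_1,\dots,g_n\in[I]_d$ and set $J=(g_1,\dots,g_n)$, a height-$n$ complete intersection contained in $I$. The key claim is that $J$ is a reduction of $I$, i.e.\ $\overline J=\overline I$. Granting this, $J\subseteq I$ sandwiches $\fpt(J)$ between $n/d$ (\Cref{prop:tw-fpt-equigen}) and $\fpt(I)=n/d$, so $\fpt(J)=n/d$; \Cref{lem:min-fpt-char-CI-height-n} gives $\ess(J)=n$; \Cref{lem:ess-dim-equals-height} then produces coordinates with $\overline J=(x_0,\dots,x_{n-1})^d$, and since $\overline I=\overline J$ the same lemma gives $\ess(I)=n$.

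The main obstacle is precisely the reduction claim $\overline J=\overline I$, equivalently that $n$ general $d$-forms generate a minimal reduction of $I$ — which fails in general unless the analytic spread $\ell(I)$ equals $\hgt I=n$. I would handle this by first showing that if $\ell(I)>n$ then $I$ is ``spread out'' enough that $\fpt(I)>n/d$ strictly, contradicting our hypothesis, so that we may assume $\ell(I)=n$; then $J$ is a minimal reduction, and one verifies $\overline J=\overline I$ by localizing at each minimal prime $\pf$ of $I$ and applying Rees's criterion (\Cref{thm:Rees}): the containment $J\subseteq I$ together with Bezout gives $e(IR_\pf)\le e(JR_\pf)=d^n$, and the reverse inequality $e(IR_\pf)\ge d^n$ is the crux, being the point at which the exact equality $\fpt(I)=n/d$ (rather than merely $\fpt(I)\ge n/d$) has to be exploited. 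I expect this multiplicity estimate, and ruling out $\ell(I)>n$, to be where essentially all the difficulty lies; the rest of the argument is bookkeeping around \Cref{lem:min-fpt-char-CI-height-n}, \Cref{lem:ess-dim-equals-height}, and \Cref{prop:ess-dim-general-linear-space}.
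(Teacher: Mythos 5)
Your skeleton coincides with the paper's: the ``if'' direction via \Cref{lem:ess-dim-equals-height} and \Cref{prop:monomial-fpt}, the reduction of the case $h\le n-1$ to the case where the height equals the number of variables minus one by cutting with $n-h$ general linear forms (using $\fpt$ of a linear section being at most $\fpt(I)$, \Cref{prop:tw-fpt-equigen}, and \Cref{prop:ess-dim-general-linear-space}), and, in the height-$n$ case, passing to a complete intersection $J=(g_1,\dots,g_n)$ of general $d$-forms in $I$ so that \Cref{lem:min-fpt-char-CI-height-n} and \Cref{lem:ess-dim-equals-height} give $\bar J=(x_1,\dots,x_n)^d$ after a change of coordinates. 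Up to that point your argument is fine.

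The genuine gap is exactly the step you flag and defer: you never prove $\bar I=\bar J$ (equivalently $I\subseteq(x_1,\dots,x_n)^d$), and the route you sketch --- show that analytic spread strictly larger than $n$ forces $\fpt(I)>n/d$, then prove $e(IR_\pf)\ge d^n$ at each minimal prime $\pf$ --- is a plan, not an argument. Moreover these are not auxiliary facts you can hope to quote: they are essentially equivalent to the theorem itself (the conclusion $\bar I=(x_1,\dots,x_n)^d$ already forces the analytic spread of $I$ to be $n$, since $I$ is a reduction of $\bar I$), the equality $e(JR_\pf)=d^n$ is not automatic from Bezout when $V(J)$ has components away from $\pf$, and even granting $\overline{IR_\pf}=\overline{JR_\pf}$ at every minimal prime you would still need to explain why this globalizes to $\bar I=\bar J$, which \Cref{thm:Rees} (stated for $\mf$-primary ideals) does not provide. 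The paper closes this gap by a different, more elementary device that bypasses reductions entirely: given $(x_1,\dots,x_n)^d\subseteq\bar I$, take any $d$-form $g\in\bar I$, write $\ini_>(g)=x_0^{a_0}\cdots x_n^{a_n}$ for the lexicographic order, set $a=\max_i a_i$, and exhibit explicit monomials outside $\mf^{[p^e]}$ to get $\fpt\bigl((x_1,\dots,x_n)^d+(\ini_>(g))\bigr)\ge \frac{n}{d}+\frac{a_0}{ad}$; combined with $\fpt(\ini_>(\bar I))\le\fpt(\bar I)=\fpt(I)=n/d$ from \Cref{prop:semicont} and \Cref{prop:fpt-properties}, this forces $a_0=0$, hence $g\in(x_1,\dots,x_n)^d$ by the lex property, so $I\subseteq(x_1,\dots,x_n)^d$ and \Cref{lem:ess-dim-equals-height} finishes. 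That Frobenius/initial-ideal computation is the missing ingredient in your proposal; the statement that $J$ is a reduction of $I$ then comes out as a corollary rather than serving as an input.
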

\begin{proof}
    Let $k$ be an algebraically closed field and $R = k[x_0,\dots, x_n]$. Let $I\subseteq R$ be an ideal generated by $d$-forms, and suppose that $\hgt(I) = n, \fpt(I) = n/d$. If $f_1,\dots, f_n$ are $n$ general $d$-forms in $I$, then $J = (f_1,\dots, f_n)$ is a complete intersection. By \Cref{prop:tw-fpt-equigen} and \Cref{prop:fpt-properties} (i), we have
    \[
    n/d \leq \fpt(J)\leq \fpt(I) = n/d.
    \]
    By \Cref{lem:min-fpt-char-CI-height-n,lem:ess-dim-equals-height}, we may change coordinates on $R$ such that $\bar{J} = (x_1,\dots, x_n)^d$. Then we have $(x_1,\dots, x_n)^d\subseteq \bar{I}$. Let $>$ denote the lexicographic order, and let $g$ be a $d$-form in $\bar{I}$. Write $\ini_>(g) = x_0^{a_0}\cdots x_n^{a_n}$, and note that $(x_1,\dots, x_n)^d\subseteq \ini_>(\overline{I})$. Set $a = \max_i a_i$. Then
    \[
    \ini_>(g)^{\floor{(p^e-1)/a}}\prod_{i=1}^{n}(x_i^d)^{\floor{\left((p^e-1) - a_i\floor{(p^e-1)/a}\right)/d}}\notin \mf^{[p^e]},
    \]
    so we have
    \begin{align*}
\lim_{e\to\infty}\frac{\nu_{\ini_>(\bar{I})}}{p^e}&\geq \lim_{e\to\infty}\frac{1}{p^e}\left(\floor{\frac{p^e-1}{a}} + \sum_{i=1}^n\floor{\frac{p^e-1}{d} - \frac{a_i\floor{(p^e-1)/a}}{d}}\right)\\=&\frac{1}{a} + \sum_{i=1}^{n} \left(\frac{1}{d} - \frac{a_i}{ad}\right) = \frac{n}{d} + \frac{a_0}{ad}.
    \end{align*}
    Consequently, by \Cref{prop:fpt-properties,prop:semicont} we have
    \[
    \frac{n}{d} = \fpt(I) = \fpt(\bar{I}) \geq \fpt(\ini_>(\bar{I})) \geq \fpt((x_1,\dots,x_n)^d + (x_0^{a_1}\cdots x_n^{a_n})) = \frac{n}{d} + \frac{a_0}{ad},
    \]
    so $a_0 = 0$ and hence $\ini_>(g)\in (x_1,\dots, x_n)^d$. As $>$ is the lexicographic order, it follows that $g\in (x_1,\dots, x_n)^d$. As $g$ was arbitrary, we conclude that $I \subseteq (x_1,\dots, x_n)^d$, hence $\overline{I} = (x_1,\dots, x_n)^d$ by \Cref{lem:ess-dim-equals-height}.

    Next, we consider the case that $\hgt I\neq n$. If $\hgt I = n+1$, then $\overline{I} = (x_0,\dots, x_n)^d$ by \Cref{thm:Rees}. Otherwise, suppose $\hgt I = h \leq n-1$. Let $L$ be an ideal generated by $n-h$ linear forms. Then $\frac{h}{d}\leq \fpt(\frac{I + L}{L})\leq \frac{h}{d}$, so by the height-$n$ case, we have $\ess(\frac{I+L}{L}) = h$. By \Cref{prop:ess-dim-general-linear-space}, the same holds for $I$. 
\end{proof}

\section{The Test Ideal at the Threshold}
In the introduction, we claimed that the best-known result in characteristic zero (\Cref{thm:equigen-lct}) is stronger than the previous best-known result in positive characteristic (\Cref{prop:tw-fpt-equigen}). Indeed, \Cref{thm:equigen-lct} shows that analogs of \Cref{prop:tw-fpt-equigen} and \Cref{thm:equigen-fpt} hold in characteristic zero. 
\begin{prop}
Let $k$ be an algebraically closed field of characteristic zero. Let $I$ be a homogeneous ideal in $k[x_0,\dots, x_n]$ generated by $d$-forms. If $h$ is the height of $I$, then $\fpt(I) \geq h/d$ with equality if and only if $\overline{I} = (x_0,\dots, x_{h-1})^d$ up to change of coordinates.
\end{prop}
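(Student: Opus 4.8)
In characteristic zero we read $\fpt(I)$ as the log canonical threshold $\lct(I)$, and the plan is to obtain the statement as a formal consequence of \Cref{thm:equigen-lct} and \Cref{lem:ess-dim-equals-height}; observe that the proof of the latter invokes only \Cref{thm:Rees} and so is valid in any characteristic. Write $I = (f_1,\dots,f_r)$ with $\deg f_i = d$ for all $i$, set $c = \lct(I)$, let $Z$ be the non-klt locus of $(R, I^c)$, and put $e = \codim Z$. Since $c$ is a jumping number of $I$, the locus $Z$ is nonempty, and since the pair $(R, I^c)$ is automatically klt at every point outside $V(I)$ we have $Z \subseteq V(I)$; hence $e \geq \codim V(I) = \hgt I = h$. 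Feeding this into \Cref{thm:equigen-lct} gives $\lct(I) \geq e/d \geq h/d$, which proves the inequality.

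To classify the equality case, I would argue as follows. If $\lct(I) = h/d$, then $h/d \leq e/d \leq \lct(I) = h/d$, so $e = h$ and, moreover, equality holds in \Cref{thm:equigen-lct}. Thus there are independent linear forms $\ell_1,\dots,\ell_h$ with $f_i \in k[\ell_1,\dots,\ell_h]$ for every $i$, i.e.\ $I$ is extended from $k[\ell_1,\dots,\ell_h]$ and $\ess(I) \leq h$. The reverse bound $\ess(I) \geq h$ holds unconditionally: if $I$ is extended from an ideal $I'$ in a polynomial ring on $\ess(I)$ of the variables, faithful flatness gives $h = \hgt I = \hgt I' \leq \ess(I)$. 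So $\ess(I) = h$, and \Cref{lem:ess-dim-equals-height} yields $\overline{I} = (x_0,\dots,x_{h-1})^d$ after a linear change of coordinates. For the converse implication I would use that the log canonical threshold depends only on the integral closure of the ideal (the char-zero counterpart of \Cref{prop:fpt-properties}(iii)), together with $\lct(\mathfrak{b}^d) = \tfrac1d\lct(\mathfrak{b})$ and the fact that the log canonical threshold of the ideal of a smooth linear subspace of codimension $h$ equals $h$, to conclude $\lct\big((x_0,\dots,x_{h-1})^d\big) = h/d$ and hence $\lct(I) = h/d$.

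I do not expect a real obstacle in this argument, since all the geometric content is packaged inside \Cref{thm:equigen-lct}; the only points demanding a little care are the standard facts that the non-klt locus at the threshold is nonempty and contained in $V(I)$, and the bookkeeping that \Cref{lem:ess-dim-equals-height} carries over to characteristic zero verbatim because its proof rests solely on Rees's multiplicity theorem.
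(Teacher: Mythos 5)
Your proposal is correct and follows essentially the same route as the paper: both reduce the inequality to the containment of the non-klt locus in $V(I)$ together with \Cref{thm:equigen-lct}, and both reduce the equality case to \Cref{lem:ess-dim-equals-height}. The only differences are cosmetic --- you spell out the reverse bound $\ess(I)\geq h$, the converse implication, and the characteristic-independence of \Cref{lem:ess-dim-equals-height}, all of which the paper leaves implicit.
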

\begin{proof}
    Let $R = k[x_0,\dots, x_n]$. Since $(R_\pf, (1)^t)$ is klt for any prime ideal $\pf\in \Spec R$ and all $t > 0$, the non-klt locus $Z$ of $(R,  I^{\lct(I)})$ is contained in $V(I)$. Consequently, by \Cref{thm:equigen-lct} we have
    \[
    \lct(I) \geq \frac{\codim Z}{d} \geq \frac{\hgt I}{d} = \frac{h}{d}.
    \]
    Write $I = (f_1,\dots, f_r)$.
    If additionally $\lct(I) = \frac{h}{d}$, then $\codim(Z) = h$ and $\lct(I) = \frac{\codim(Z)}{d}$, so there exist linear forms $\ell_1,\dots, \ell_h \in R$ such that $f_i\in k[\ell_1,\dots, \ell_h]$. Changing coordinates, we may assume $\ell_i = x_{i-1}$ for $1\leq i\leq h$. The result then follows from \Cref{lem:ess-dim-equals-height}.
\end{proof}
By \cite{hara_generalization_2003}, the correct positive-characteristic analog of \Cref{thm:equigen-lct} considers strong $F$-regularity and the $F$-pure threshold. We direct the reader to \cite{hara_on_a_2004} for background on the test ideal $\tau(R,\af^t)$, which cuts out the non-strongly $F$-regular locus of the pair $(R, \af^t)$. With this in mind, we are now able to give an example of the failure of \Cref{thm:equigen-lct} to generalize to positive characteristic.
\begin{example}\label{ex:naive-analog-failure}
    Suppose $p\equiv 2\mod 3$. Let $R = \F_p[x,y,z]$ and $f = (x^3 + y^3 + z^3)$. By \cite[Theorems 3.1 and 3.3]{hernandez-diagonal-2015}, we have $\fpt(f) = 1 - \frac{1}{p}$ and $\tau(R, f^{1-1/p}) = (x,y,z)$. A naive translation of \Cref{thm:equigen-lct} predicts that $\fpt(f)\geq \frac{\hgt((x,y,z))}{3}$, but this is not the case.
\end{example}
Motivated by the failure of the positive-characteristic analog of \Cref{thm:equigen-lct} in the case that $p$ divides the denominator of $\fpt(f)$, we impose the additional condition that the pair $(R, I^{\fpt(I)})$ is sharply $F$-split.
\begin{theorem*}[\ref{thm:test-ideal-at-threshold}]
    Suppose $\char k = p>0$ and $R = k[x_0,\dots, x_n]$. Let $I\subseteq R$ be an ideal generated by $d$-forms and let $c = \fpt(I)$. Let $h$ denote the height of $\tau(R, I^c)$. If $(R, I^c)$ is sharply $F$-pure, then $c\geq \frac{h}{d}$.
\end{theorem*}
\begin{proof}
    Let $\pf$ be a minimal prime over $\tau(I^c)$. Let $c_\pf:= \fpt(R/\pf, \mf/\pf)$. For any $\varepsilon>0$, the pair $(R/\pf, (\mf/\pf)^{c_\pf+\varepsilon})$ is not $F$-pure by \cite[Proposition 2.2 (4)]{takagi_f-pure_2004}, hence \cite[Lemma 3.9]{takagi_inversion_2004}) yields that for all $e \gg 0$, we have
    \[
    \mf^{\ceil{p^e(c_\pf+\varepsilon)}}(\pf^{[p^e]}:\pf)\subseteq \mf^{[p^e]},
    \]
    hence 
    \begin{equation}\label{eqn:fedder}
    (\pf^{[p^e]}:\pf)\subseteq (\mf^{[p^e]}:\mf^{\ceil{p^e(c_\pf+\varepsilon)}}) = \mf^{[p^e]}+\mf^{(n+1)(p^e-1)+1-{\ceil{p^e(c_\pf+\varepsilon)}}},\end{equation}
    where the second equality is by \cite[Lemma 3.2]{bhatt_f_2015}.

    By \cite[Propositions 4.5 and 4.7]{schwede_centers_2010}, the ideal $\pf$ is uniformly $(I^c, F)$-compatible, hence for all $e > 0$ we have $I^{\ceil{(p^e-1)c}}\subseteq (\pf^{[p^e]}:\pf)$. On the other hand, because $(R, I^c)$ is sharply $F$-pure, there exists $e>0$ such that for all $f > 0$, we have $I^{\ceil{c(p^{ef}-1)}}\not\subseteq \mf^{[p^{ef}]}$. Choose a minimal generator $z$ of $I^{\ceil{c(p^{ef}-1)}}$ which is not in $ \mf^{[p^{ef}]}$. Estimating $\deg(z)$ with \Cref{eqn:fedder} yields
    \begin{equation}\label{eqn:degree-comparison}
    d\ceil{c(p^{ef}-1)} \geq (n+1)(p^{ef}-1)+1-{\ceil{p^{ef}(c_\pf+\varepsilon)}}.
    \end{equation}
    Sending $f\to\infty$, we conclude that
    \begin{align*}
    cd =& \lim_{f\to\infty} \frac{d\ceil{c(p^{ef}-1)}}{p^{ef}} \\\geq &\lim_{f\to\infty} \frac{(n+1)(p^{ef}-1)+1-{\ceil{p^{ef}(c_\pf+\varepsilon)}}}{p^{ef}} =(n+1)-c_\pf-\varepsilon.
    \end{align*}
    By \cite[Proposition 2.6]{takagi_f-pure_2004}, we have
    \begin{equation}\label{eqn:height-fpt}
       \fpt(R/\pf, \mf/\pf) \leq \dim(R/\pf)\leq n+1-h
    \end{equation}
    Sending $\varepsilon\to 0$, we conclude that $cd \geq h$, proving the claim.
\end{proof}
We can now say a bit more about the case of equality.
\begin{prop}\label{prop:test-ideal-in-equality-case}
    Assume the setup of \Cref{thm:test-ideal-at-threshold}. If $c = h/d$, then there exist linear forms $\ell_1,\dots, 
    \ell_h$ such that $\tau(I^c) = (\ell_1,\dots, \ell_h)$.
\end{prop}
\begin{proof}
    Let $\pf_1,\dots, \pf_r$ denote the minimal primes over $\tau(I^c)$. Suppose $c = h/d$ and run the argument of \Cref{thm:test-ideal-at-threshold} with $\pf = \pf_i$. Each inequality must be an equality; in particular, we must have $c_\pf = n + 1 -h = \dim(R/\pf)$. By \cite[Theorem 2.7]{takagi_f-pure_2004}, $R/\pf$ is regular, hence $\pf_i$ is generated by linear forms. 

    Next, we show that $r = 1$. Suppose to the contrary that $\pf_1,\pf_2$ are distinct minimal primes over $\tau(I^c)$. Let $1\leq j\leq h$ and let \[u_1,\dots, u_j,v_1,\dots, v_j,w_{j+1},\dots, w_h\] be linear forms such that $(u_1,\dots, u_j,v_1,\dots, v_j)$ are linearly independent and 
    \[\pf_1 = (u_1,\dots, u_j,w_{j+1},\dots, w_h)\qquad \pf_2 = (v_1,\dots, v_j,w_{j+1},\dots, w_h).\]
    For $q$ a power of $p$, we have
    \[
    (\pf_1^{[q]}:\pf_1)\cap (\pf_2^{[q]}:\pf_2) = \pf_1^{[q]}+\pf_2^{[q]} + (u_1\dots u_j v_1\dots v_j w_{j+1}\dots w_h)^{q-1}R.
    \]
    Putting $q = p^{ef}$ as in the proof of \Cref{thm:test-ideal-at-threshold} and noting that 
    \[I^{\ceil{(p^{ef}-1)(h/d)}}\subseteq \left((\pf_1^{[q]}:\pf_1)\cap (\pf_2^{[q]}:\pf_2)\right)\setminus \mf^{[p^{ef}]},\]
    comparing degrees gives us the contradiction
    \[
    h(p^{ef}-1) \geq (h+j)(p^{ef}-1) > h(p^{ef}-1).
    \]
    It follows that $r = 1$. Moreover, by \cite[Corollary 3.3]{schwede_centers_2010}, $\tau(I^c)$ is a radical ideal, so $\tau(I^c) = \pf_1$ is generated by linear forms.
\end{proof}
Although $\tau(I^c)$ is generated by linear forms $\ell_1,\dots, \ell_h$ in the $c = h/d$ case, it needn't be the case that $I$ is extended from $k[\ell_1,\dots, \ell_h]$. By \Cref{thm:equigen-lct}, an example of the failure of this property must necessarily satisfy $\fpt(I) \neq \lct(I)$ and $(p^e-1)\fpt(I)\in \Z^+$ for some $e>0$. A good source of such examples is \cite{canton_fpt_2016}.
\begin{example}\label{ex:CHSW}
    We homogenize the example of Proposition 2.7 in op. cit. with a minor variation. Let $p$ be a prime number and choose $n\geq 4$, $q = p^e\geq n$ such that
    $p\nmid (n(q-1) -q -1) =: s$. We then set $c = 1/(q-1)$ and 
    \[
    R = \F_p[x_1,\dots, x_n, y_1,\dots, y_s],\quad f = y_1\dots y_s(x_1^{q+1}+\dots+x_n^{q+1}) + (x_1\dots x_n)^{q-1}.
    \]
    Put $\bfr = (x_1,\dots, x_n)$. As $f^1\in (\bfr^{[q]}:\bfr)\setminus \mf^{[q]}$, proposition 2.3 in op. cit.--- which is stated for the local case, but holds in the graded case by the same argument --- implies that $\fpt(f) = c$.
    The argument in loc. cit. also implies that $(R, f^c)$ is sharply $F$-pure and $\tau(R, f^c)\subseteq \bfr$. As $\tau(R, f^c)$ is radical by \cite[Corollary 3.3]{schwede_centers_2010}, we may write $\tau(R, f^c) = \bfr\cap \qf_1\cap \dots \cap \qf_r$, where each $\qf_i\neq \bfr$ is a minimal prime over $\tau(R, f^c)$.
    
    We now show $\tau(R, f^c) = \bfr$. To see this, set \[
    Y = y_1\dots y_s,\quad R' =R[Y^{1/s}, Y^{-1}],\quad f'=x_1^{q+1}+\dots +x_n^{q+1} + (x_1\dots x_n)^{q-1}.
    \]
    The automorphism of $R'$ given by $x_i\mapsto Y^{1/s}x_i$ sends $f$ to $Y^{1 + (q+1)/s}f'$, so $\tau(R', f^c)= \tau(R', (f')^c)$. Because $p\nmid s,$ the extension $R\to R'$ is etale, so we have
    \begin{equation}\label{eqn:dehomogenized-test-ideal}
    \bfr R'\cap \qf_1R' \cap \dots \cap \qf_r R' = \tau(R, f^c)R' = \tau(R', f^c) = \tau(R', (f')^c) = \bfr R',
    \end{equation}
    where the final equality is by \cite[Remark 2.6]{canton_fpt_2016}, which implies that the Jacobian ideal of $f'$ is primary to $\bfr$. By \Cref{eqn:dehomogenized-test-ideal} we must have $\qf_i R' = R'$ for all $1\leq i\leq r$, hence $\qf_i\in V(y_1\dots y_s)$. We aim to show that $r=0$. To see this, note that by \cite[Proposition 4.5]{schwede_centers_2010}, the primes $(x_1,\dots, x_n), \qf_1,\dots, \qf_r$ are precisely the prime ideals which are uniformly $(f^c, F)$-compatible, hence we have $f^{\ceil{c(q-1)}}\in (\qf_i^{[q]}:\qf_i)$. As $\qf_i\in V(y_1\dots y_s)$, we have $y_j\in \qf_i$ for some $j$. We may then write $\qf_i = (y_j, Q)$ where $Q$ is extended from the subring $k[y_1,\dots, \hat{y_j},\dots, y_s,x_1,\dots, x_n]$. We compute (c.f. \cite[Lemma 5.2]{deStefani_defect_2026})
    \begin{align*}
    (\qf_i^{[q]}: \qf_i) &= (\qf_i^{[q]}:y_j) \cap (\qf_i^{[q]}:Q)\\&= (Q^{[q]},y_j^{q-1}) \cap (y_j^q, Q^{[q]}:Q) = \qf_i^{[q]} + y_j^{q-1}(Q^{[q]}:Q).
    \end{align*}
    The $y_j$-degree of $f^{\ceil{c(q-1)}}$ is strictly less than $q-1$, so in fact \(f^1 = f^{\ceil{c(q-1)}} \in \qf_i^{[q]}.\) This contradicts the earlier observation that $f^1\notin \mf^{[q]}$, so $r = 0$ and $\tau(R, f^c) = \bfr$.

    In this example, we have observed that
    \[\fpt(f) = \frac{1}{q-1} = \frac{n}{n(q-1)}=\frac{\hgt(\tau(f^{\fpt(f)}))}{\deg(f)},\]
    but $f\notin k[x_1,\dots, x_n]$ in contrast to \Cref{thm:equigen-lct}. 
    
    Indeed, suppose $f$ is contained in a subring $R_0:= k[\ell_1,\dots, \ell_t]\subseteq R$, where the $\ell_i$ are linear forms. By observing that $\frac{\del \ell_i}{\del x_j} \in k, \frac{\del \ell_i}{\del y_j} \in k$ and repeatedly applying the Leibniz rule, we see that $R_0$ is closed under the operators $\del/\del y_i, \del/\del x_i$. Additionally, pick linear forms $\ell_{t+1},\dots, \ell_{s+n}$ such that $R = R_0[\ell_{t+1},\dots, \ell_{s+n}]$. If $g_1,\dots,g_m\in R$ such that $g_1\dots g_m\in R_0\setminus \{0\}$, then each of the $g_i$ must be constant in the variables $\ell_{t+1},\dots, \ell_{s+n}$, so $g_i\in R_0$. For any $1\leq i\leq n, 1\leq j\leq s$, we may pick $j'\neq j$ and compute
    \[
    R_0\ni\frac{\del^2f}{\del x_i\del y_{j'}} = 
    y_1\dots \widehat{y_{j'}} \dots y_s x_i^q,
    \]
    hence $x_i,y_j\in R_0$. It follows that $R_0 = R$.

    % Let $R = \F_3[x,y,z,u,v]$ and $I = (uv(x^4 + y^4 + z^4) + (xyz)^2)$. Then $\fpt(I) = 1/2$, the pair $(R, I^{1/2})$ is sharply $F$-pure, and $\tau(I^{1/2}) = (x,y,z)$ has height 3. Although $3/6 = 1/2$, it is not the case that $I$ is extended from $k[x,y,z]$.
\end{example}
\printbibliography

\end{document}